\numberwithin{equation}{section}
\newcommand{\R}{\mathbb{R}}
\newcommand\subsetsim{\mathrel{%
\ooalign{\raise0.2ex\hbox{$\subset$}\cr\hidewidth\raise-0.8ex\hbox{\scalebox{0.9}{$\sim$}}\hidewidth\cr}}}
\newtheorem{theorem}{Theorem}[section]
\newtheorem{corollary}[theorem]{Corollary}
\newtheorem{proposition}[theorem]{Proposition}
\newtheorem{lemma}[theorem]{Lemma}
\theoremstyle{definition}
\newtheorem{definition}[theorem]{Definition}
\newtheorem{remark}[theorem]{Remark}
\patchcmd{\subsection}{-.5em}{.5em}{}{}
\patchcmd{\subsubsection}{-.5em}{.5em}{}{}
\title[An $\ell_1$-norm-mass inequality for complete manifolds]{An $\ell_1$-norm-mass inequality for complete manifolds}
\author{Caterina Campagnolo and Shi Wang}
\address{Departamento de Matem\'aticas, Universidad Aut\'onoma de Madrid, Madrid, Spain}
\email{caterina.campagnolo@uam.es}
\address{Institute of Mathematical Sciences, ShanghaiTech University, Shanghai, China} 
\email{shiwang.math@gmail.com}
\subjclass[2020]{Primary 53C23; Secondary 57R57}
\keywords{Gromov norm, critical exponent, mass, comass, complete manifold}
\begin{document}
\begin{abstract}
We generalize an inequality of Besson-Courtois-Gallot about volume and simplicial volume of closed manifolds to the $\ell_1$-norm of all the homology classes of complete manifolds. The inequality involves the critical exponent of the fundamental group of the manifold and the mass of the homology classes.
\end{abstract}
\maketitle

\section{Introduction}
Given a topological space $M$ and any singular homology class $\alpha\in H_k(M,\R)$, its \emph{Gromov norm} or \emph{$\ell_1$-norm} is defined as
\[\|\alpha\|_1:=\inf\left\{\sum_{i=1}^\ell |a_i|\mid \sum_{i=1}^\ell a_i\sigma_i \text{ is a singular cycle representing }\alpha \right\},\]
which measures a certain topological complexity of the homology class. In particular, if $M$ is a closed, connected, oriented manifold, then the Gromov norm of its fundamental class is called the \emph{simplicial volume} of $M$, denoted by $\|M\|$.

Gromov first introduced this notion in his proof of Mostow rigidity \cite{Munkholm_1980}, and observed its close relation to the Riemannian volume in his seminal paper {\cite{Gromov_1982}. He proved that:

\begin{theorem}\cite{Gromov_1982}
	If a closed connected oriented Riemannian $n$-manifold $M$ has Ricci curvature bounded below by $-(n-1)$, then
	\[\|M\|\leq(n-1)^nn!\operatorname{vol}(M).\] 
\end{theorem}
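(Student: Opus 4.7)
The plan is a smearing construction adapted from Thurston's proof in the hyperbolic setting, with the hyperbolic-geometric estimates replaced by Bishop--Gromov volume comparison. First, I would pass to the universal cover $\widetilde M$ with deck group $\Gamma=\pi_1(M)$, where the Ricci lower bound lifts. Bishop--Gromov then yields
\[\operatorname{vol}(B(\widetilde x,r))\leq V_{-1}(r)\qquad\text{for every }\widetilde x\in\widetilde M\text{ and }r>0,\]
with $V_{-1}(r)$ the volume of a ball of radius $r$ in $\H^n$ of curvature $-1$.

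Second, I would construct an efficient real fundamental cycle by smearing. Fix a Borel fundamental domain $D\subset\widetilde M$ for the $\Gamma$-action, and for an ordered $(n+1)$-tuple $(\widetilde x_0,\dots,\widetilde x_n)$ of points in $\widetilde M$ write $[\widetilde x_0,\dots,\widetilde x_n]\colon\Delta^n\to M$ for the straight simplex obtained by iterated geodesic coning and projection. For a large parameter $R>0$ and a symmetric weight $\varphi_R$ supported on tuples with pairwise distances of order $R$, form the smeared real chain
\[c_R=\frac{1}{Z_R}\int_D\sum_{(\gamma_0,\dots,\gamma_n)\in\Gamma^{n+1}}\varphi_R(\gamma_0\widetilde x,\dots,\gamma_n\widetilde x)\,[\gamma_0\widetilde x,\dots,\gamma_n\widetilde x]\,d\!\operatorname{vol}(\widetilde x).\]
A Fubini-and-degree calculation, modelled on Thurston's, shows that for a suitable normalisation $Z_R$ the cycle $c_R$ represents $[M]\in H_n(M;\R)$ and that its $\ell^1$-mass equals $\operatorname{vol}(M)/V_R$, where $V_R$ is the average volume of a straight simplex in the integrand.

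The analytic core is therefore a lower bound on $V_R$. In $\H^n$ the regular geodesic simplex of side length $R$ has volume bounded below, for $R$ large, by a known dimensional constant which one checks is at least $1/\bigl((n-1)^n n!\bigr)$; this is where the two factors appear, with $n!$ coming from the symmetry group of the simplex and $(n-1)^n$ from the exponential growth rate of volumes in $\H^n$ of curvature $-1$. To transfer this lower bound to $\widetilde M$, one applies Bishop--Gromov at the base vertex of the iterated cone to control the radial Jacobian of the exponential map in $\widetilde M$ by that in $\H^n$. Substituting into $\|c_R\|_1\leq\operatorname{vol}(M)/V_R$ and letting $R\to\infty$ produces $\|M\|\leq(n-1)^n n!\,\operatorname{vol}(M)$.

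The main obstacle will be the Jacobian comparison for straight simplices under only a Ricci (rather than sectional) lower bound: Ricci controls spherical averages, not individual sectional curvatures, so the coning Jacobians cannot be bounded pointwise. The standard workaround is to bound only the \emph{integrated} volume of a straight simplex, using Bishop--Gromov on concentric spheres about the base vertex and integrating in polar coordinates; this is enough because smearing replaces pointwise control of simplex volumes by an average over $D$. Once that averaged Jacobian estimate is in place, the remaining work is bookkeeping of the constants.
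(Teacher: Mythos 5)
Your proposal is not the argument behind this theorem, and as written it cannot be repaired. Gromov's proof (and the proof this paper generalizes) is \emph{dual}: one bounds $\|M\|$ via the pairing with bounded cohomology, $\|\alpha\|_1=\sup\{\langle\beta,\alpha\rangle\mid \beta\in H^n_b,\ \|\beta\|_\infty\leq 1\}$, and constructs a smoothing operator $\psi\colon\widetilde M\to\mathcal M_1$ (here $x\mapsto$ a normalized measure of the form $e^{-s\rho_x}\,d\widetilde\mu$) whose $n$-Jacobian is controlled; the Ricci hypothesis enters \emph{only} through the Bishop--Gromov upper bound on volume growth, which forces the critical exponent $\delta\leq n-1$ and hence the convergence of these measures for $s>n-1$. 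That is exactly the mechanism of Theorem \ref{main theorem} of this paper, which recovers the stated inequality (with the better constant $(n-1)^n n!/n^{n/2}$) by taking $k=n$ and $\mathrm{mass}([M])=\operatorname{vol}(M)$. Your proposal is instead a \emph{primal} smearing construction, and this is where it breaks.

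The decisive gap is the direction of the comparison inequality. Your argument needs the straight simplices occurring in $c_R$ to have volume bounded \emph{below} by $1/\bigl((n-1)^n n!\bigr)$, since the final bound is $\|M\|\leq\operatorname{vol}(M)/V_R$. But a Ricci lower bound yields, via Bishop--Gromov, only an \emph{upper} bound on volumes and on the (averaged) Jacobian of the exponential map; it gives no lower bound whatsoever on the volume of geodesic simplices in $\widetilde M$. The universal cover may have polynomial or even bounded volume growth, and geodesic cones over large configurations may be arbitrarily degenerate, so $V_R$ can be made as small as you like relative to the model value in $\H^n$; your proposed workaround (integrating Bishop--Gromov over concentric spheres) still produces an upper bound and cannot close this. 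There are further obstructions you would have to confront even before the volume estimate: without nonpositive curvature the iterated geodesic coning $[\widetilde x_0,\dots,\widetilde x_n]$ is not well defined (geodesics are not unique), and the Fubini-and-degree identity you invoke --- that $c_R$ is a cycle representing $[M]$ with $\ell^1$-mass exactly $\operatorname{vol}(M)/V_R$ --- relies in Thurston's setting on a transitive isometry group and on the simplices having nowhere-negative Jacobian; under a mere Ricci bound the signed identity $\sum_i a_i\operatorname{vol}(\sigma_i)=\operatorname{vol}(M)$ does not control $\sum_i|a_i|$ because of folding and cancellation. I would recommend abandoning the smearing route here and working through the dual argument of Section \ref{section:proof of main theorem}, where the only geometric input is Lemma \ref{lem:L2_convergent} together with $\delta\leq n-1$.
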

As a consequence, he gives the first non-trivial topological lower bound for the minimal volume of a smooth manifold $M$. Since then, it has been a recurrent theme in the field to obtain constraints and relations between volume and simplicial volume of a manifold. In 1991, Besson, Courtois, and Gallot \cite{besson-courtois-gallot} improved the upper bound of Gromov and proved:

\begin{theorem}\cite{besson-courtois-gallot}\label{thm:BCG91} Any closed connected oriented Riemannian $n$-manifold $M$ satisfies
	\[\|M\|\leq\frac{(n-1)^nn!}{n^{n/2}}\operatorname{minvol}(M),\]
	where
	\[\operatorname{minvol}(M)=\inf\{\operatorname{vol}_g(M)\mid g \text{ is a complete Riemannian metric on $M$ whose}\] \[\text{sectional curvature satisfies }|K|\leq 1\}.\]
\end{theorem}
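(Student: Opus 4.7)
The plan is to prove, for each complete Riemannian metric $g$ on $M$ with $|K_g|\leq 1$, the inequality
\[\|M\|\leq\frac{(n-1)^{n}n!}{n^{n/2}}\operatorname{vol}_{g}(M),\]
and then pass to the infimum over $g$. Since $M$ is closed every metric is automatically complete, so the only real constraint is the curvature bound. The argument factors through the volume entropy
\[h(g):=\lim_{r\to\infty}\frac{1}{r}\log\operatorname{vol}_{g}B_{\widetilde M}(\tilde x,r)\]
of the Riemannian universal cover $\widetilde M$, via two essentially independent estimates.

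\textbf{Step 1: bounding the entropy.} The bound $|K_g|\leq1$ implies $\operatorname{Ric}_{g}\geq-(n-1)$, and Bishop-Gromov volume comparison on $\widetilde M$ bounds the volume of balls in $\widetilde M$ by the volume of balls of the same radius in real hyperbolic space of curvature $-1$. Since the latter grow at exponential rate $n-1$, this yields $h(g)\leq n-1$. This is the classical input already used by Gromov.

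\textbf{Step 2: sharp entropy--simplicial volume inequality.} The core task is to establish the Besson-Courtois-Gallot improvement
\[\|M\|\leq\frac{n!}{n^{n/2}}\,h(g)^{n}\operatorname{vol}_{g}(M)\]
of Gromov's $\|M\|\leq n!\,h(g)^{n}\operatorname{vol}_{g}(M)$. Following BCG, I would introduce the \emph{spherical volume} $\operatorname{SphVol}(M)$, defined as the infimum of $\operatorname{vol}(\Phi^{*}g_{S^{\infty}})$ over all $\pi_{1}M$-equivariant Lipschitz maps $\Phi\colon\widetilde M\to S^{\infty}\subset L^{2}(\widetilde M)$, and split the inequality as
\[\|M\|\leq C_{1}\operatorname{SphVol}(M)\leq C_{2}\,h(g)^{n}\operatorname{vol}_{g}(M),\qquad C_{1}C_{2}=\tfrac{n!}{n^{n/2}}.\]
The first estimate is proved by straightening singular cycles through Riemannian barycenters in $S^{\infty}$ and invoking the sharp volume bound for geodesic $n$-simplices in the round sphere. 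The second is obtained by testing with the explicit equivariant family $\Phi^{s}(x):=\sqrt{\mu^{s}_{x}}$ associated to the measures $\mu^{s}_{x}:=e^{-sd_{g}(x,\cdot)}dv_{g}/\|e^{-sd_{g}(x,\cdot)}\|_{L^{1}(\widetilde M)}$, which are well defined for every $s>h(g)$, followed by a pointwise computation of $d\Phi^{s}$ using Cauchy-Schwarz and the limit $s\searrow h(g)$.

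\textbf{Main obstacle.} The delicate part is obtaining the factor $1/n^{n/2}$ in Step 2: it comes from an AM-GM step bounding the determinant of the pullback quadratic form $\Phi^{s\,*}g_{S^{\infty}}$ by a power of its trace, where the trace is controlled pointwise by $s^{2}/4$ times the norm of the test vector. Matching this sharply with the straightening inequality for simplices in $S^{\infty}$ requires careful bookkeeping of normalizations between the two halves of Step 2. The remaining pieces (Bishop-Gromov comparison, the pointwise analytic computation, and dominated convergence as $s\searrow h(g)$) are essentially standard. Once Step 2 is in hand, plugging in $h(g)\leq n-1$ and taking the infimum over $g$ with $|K_g|\leq 1$ completes the proof.
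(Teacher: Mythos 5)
Your overall architecture is the right one, and it is essentially the architecture of this paper: here Theorem \ref{thm:BCG91} is not reproved directly but recovered as the case $k=n$, $\alpha=[M]$ of Theorem \ref{main theorem}, using $\mathrm{mass}([M])=\operatorname{vol}_g(M)$ and the Bishop--Gromov bound $\delta\leq n-1$ under $|K|\leq 1$ (your Step 1; for a closed manifold the critical exponent you should compare with coincides with your volume entropy $h(g)$). The second half of your Step 2 --- the explicit equivariant family $\Phi^s$ into $S^\infty\subset L^2(\widetilde M,\widetilde\mu)$, the pointwise trace bound, the AM--GM/Cauchy--Schwarz step producing the $n^{-n/2}$, and the limit $s\searrow h$ --- matches Lemmas \ref{lem:L2_convergent} and \ref{lem:k-Jac} of the paper, and your bookkeeping of the constants ($C_1C_2=n!/n^{n/2}$ with $C_2=(4n)^{-n/2}$, hence $C_1=2^n n!$) is consistent.

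The genuine gap is in the mechanism you propose for the first half of Step 2, namely $\|M\|\leq 2^n n!\cdot\operatorname{SphVol}(M)$. Straightening cycles through Riemannian barycenters in $S^\infty$ and ``invoking the sharp volume bound for geodesic $n$-simplices in the round sphere'' cannot give this: geodesic $n$-simplices in a unit round sphere have volume on the scale of $\operatorname{vol}(S^n)$ (already for $n=2$ an octant has area $\pi/2$, whereas you would need $2^{-2}/2!=1/8$), and barycentric straightening is not even canonically defined in positive curvature. So this route either fails outright or yields a constant far worse than $2^n n!$, destroying the final bound. The actual mechanism --- in Besson--Courtois--Gallot and in this paper --- is to leave the sphere: compose $\Phi^s$ with $J\colon f\mapsto f^2\widetilde\mu$ into the affine space of probability measures $\mathcal M_1$ (this map is $2$-Lipschitz, Lemma \ref{map J 2-lipschitz}, which is where the $2^n$ comes from), work with affine simplices in $\mathcal M_1$ whose parametrizing Euclidean simplex has volume $1/n!$ (Lemmas \ref{volume simplex} and \ref{alternating and differential form}, which is where the $n!$ comes from), and close the argument with the duality $\|M\|=\sup\{\langle\beta,[M]\rangle:\beta\in H^n_b(M,\mathbb R),\ \|\beta\|_\infty\leq 1\}$ together with the isometric isomorphism $H^*_b(M,\mathbb R)\cong H^*_L(\mathcal M_1,\Gamma)$ of Lemma \ref{isometric isomorphism}. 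Without this bounded-cohomology/measure-theoretic step, the factor $2^n n!$ in your decomposition is unsupported and the proof does not close.
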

It is natural to extend the above inequalities to lower degree homology classes, after replacing the volume by a proper geometric notion of ``volume of a homology class''. It is shown by Gromov that:

\begin{theorem} \cite[Section 2.5]{Gromov_1982}\label{thm:Gromov_mass}
	Let $M$ be a complete Riemannian $n$-manifold with Ricci curvature bounded below by $-(n-1)$, and let $\alpha\in H_k(M,\mathbb R)$ be any homology class. Then
	\[\|\alpha\|_1\leq (n-1)^kk!\cdot \mathrm{mass}(\alpha).\]
\end{theorem}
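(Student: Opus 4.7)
The plan is to exploit the duality, via Hahn--Banach, between the $\ell_1$-norm on singular homology and the sup-norm on bounded singular cocycles. Specifically,
\[\|\alpha\|_1 = \sup\bigl\{|\phi(\alpha)| : \phi \in C^k_b(M;\R) \text{ is a cocycle with }\|\phi\|_\infty \le 1\bigr\},\]
so it suffices to show $|\phi(\alpha)| \le (n-1)^k k!\cdot \mathrm{mass}(\alpha)$ for each such $\phi$. To pair $\phi(\alpha)$ with the geometric quantity $\mathrm{mass}(\alpha)$, I would represent the cohomology class of $\phi$ by a closed smooth $k$-form $\omega_\phi$ of controlled comass, so that for any smooth Lipschitz cycle $c = \sum_i a_i\sigma_i$ representing $\alpha$,
\[|\phi(\alpha)| = \Bigl|\int_c \omega_\phi\Bigr| \le \|\omega_\phi\|_{\mathrm{comass}} \cdot \sum_i |a_i| \operatorname{vol}(\sigma_i) \le \|\omega_\phi\|_{\mathrm{comass}} \cdot \mathrm{mass}(\alpha),\]
the last step by taking the infimum over all smooth representatives of $\alpha$.

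The heart of the argument is then to produce, for any bounded cocycle $\phi$, a form $\omega_\phi$ with $\|\omega_\phi\|_{\mathrm{comass}} \le (n-1)^k k!\cdot \|\phi\|_\infty$. I would construct $\omega_\phi$ by a smoothing/straightening procedure: lifting $\phi$ to a bounded cocycle $\widetilde\phi$ on the universal cover $\widetilde M$, one evaluates $\widetilde\phi$ on a carefully chosen family of geodesic $k$-simplices with vertices $p,\exp_p(\eps v_1),\ldots,\exp_p(\eps v_k)$, and extracts the alternating multilinear part to define $\omega_\phi(p)(v_1,\ldots,v_k)$. The cocycle identity for $\phi$ forces $\omega_\phi$ to be closed, and an acyclic-models / double-complex argument identifies its class with that of $\phi$ under the de Rham isomorphism. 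The factor $k!$ in the constant appears as the reciprocal of the volume of the standard Euclidean $k$-simplex used to normalise the extraction, while $(n-1)^k$ arises as the $k$-th power of the Bishop--Gromov bound on the Jacobian of $\exp_p$ along a radial direction under $\operatorname{Ric} \ge -(n-1)$, the comparison model being $\H^n$ of constant sectional curvature $-1$.

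The main obstacle is achieving the sharp comass bound in this construction. A bounded cocycle does not directly yield a bounded form by the $\eps \to 0$ limit of the extraction above, since the alternating multilinear part naively diverges like $\eps^{-k}$. Instead, one must smooth $\widetilde\phi$ at a well-chosen finite scale $\eps$ and optimise, using the Bishop--Gromov comparison at that scale to control the volume distortion of $\exp_p$. This is precisely the point where Gromov's original argument deploys a diffusion-of-cycles construction on $\widetilde M$, and where the constant $(n-1)^k k!$ emerges as the optimal trade-off between the infinitesimal blowup of the extraction and the exponential growth of the Jacobian in the hyperbolic comparison space.
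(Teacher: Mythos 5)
Your reduction is the right one, and it is the same reduction the paper uses for its sharper version of this inequality: by the duality $\|\alpha\|_1=\sup\{\langle\beta,\alpha\rangle \mid \beta\in H^k_b(M,\mathbb{R}),\ \|\beta\|_\infty\le 1\}$, everything comes down to representing (the image of) each bounded class by a closed form of comass at most $(n-1)^k k!$. One slip in the easy half: with the definition $\mathrm{mass}(\alpha)=\sup\{\int_\alpha\omega \mid \mathrm{comass}(\omega)\le 1\}$ used here, the inequality $|\int_\alpha\omega_\phi|\le\mathrm{comass}(\omega_\phi)\cdot\mathrm{mass}(\alpha)$ is immediate, and your detour through $\sum_i|a_i|\operatorname{vol}(\sigma_i)$ is both unnecessary and backwards --- the infimum over representatives of $\sum_i|a_i|\operatorname{vol}(\sigma_i)$ dominates $\mathrm{mass}(\alpha)$, it is not dominated by it.

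The genuine gap is in the heart of the argument, which you name but do not carry out. Your proposed construction --- evaluate $\widetilde\phi$ on geodesic simplices with vertices $p,\exp_p(\eps v_1),\dots,\exp_p(\eps v_k)$, alternate, and pass to a limit or an ``optimal scale'' --- cannot work as described: a bounded cocycle has no regularity whatsoever, so its alternating part at any fixed scale $\eps$ carries no quantitative information beyond $\|\phi\|_\infty$ times the number of terms, and as you concede the $\eps\to 0$ normalisation diverges like $\eps^{-k}$. There is no demonstrated trade-off producing $(n-1)^kk!$. The construction that actually works replaces points of $\widetilde M$ by probability measures: take $\mu_x$ proportional to $e^{-(n-1+\eps)\rho_x}\,dv$ (finite precisely because Bishop--Gromov under $\operatorname{Ric}\ge-(n-1)$ gives volume growth at most $e^{(n-1)r}$ --- this is where the curvature hypothesis enters, not through the Jacobian of $\exp_p$); observe that $x\mapsto\mu_x$ is $((n-1)+\eps)$-Lipschitz into the space of measures with the total variation norm; transport $\phi$ to a bounded multilinear alternating cocycle on $\mathcal{M}_1$ via the isometric isomorphism of Lemma \ref{isometric isomorphism}; convert it to a differential $k$-form whose comass is $k!$ times its sup norm as in Lemma \ref{alternating and differential form} (this, the volume of the standard simplex, is the true source of the $k!$); and pull back along $x\mapsto\mu_x$, picking up $|\mathrm{Jac}_k|\le((n-1)+\eps)^k$ from the Lipschitz constant. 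Until you supply this (or an equivalent) smoothing operator together with its Lipschitz/Jacobian estimate, the comass bound --- which is the entire analytic content of the theorem --- is not established.
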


\begin{remark}
The definition of $\mathrm{mass}(\alpha)$ is given in Section \ref{background} (Definition 
\ref{def:mass}). However, it is worth mentioning here that it is a finer notion than the ``minimal volume'' of $\alpha$. More precisely, if $\sum_{i}^\ell a_i\sigma_i$ is any piecewise smooth cycle representing $\alpha$, then we have
	\[\mathrm{mass}(\alpha)\leq \sum_{i=1}^\ell |a_i|\mathrm{vol}_k(\sigma_i).\]
Here $\mathrm{vol}_k(\sigma_i)$ denotes the natural $k$-dimensional volume of the image of $\sigma_i$ in $M$. A more precise definition can be given by the following: let $\sigma\colon \Delta^k\rightarrow M$ be a $C^1$ $k$-simplex (the situation of piecewise smooth simplex can be easily generalized) where $\Delta^k$ is the standard Euclidean $k$-simplex with any chosen Riemannian metric. We denote the corresponding volume form by $dV_{\Delta}$. Then
\[\mathrm{vol}_k(\sigma):=\int_{\Delta^k} \operatorname{Jac}_k \sigma dV_\Delta,\]
where the definition of the $k$-Jacobian function is given by
\[\operatorname{Jac}_k(\sigma)(x)=\sup \|d\sigma (e_1)\wedge d\sigma (e_2)\wedge...\wedge d\sigma (e_k)\|,\quad\forall x\in \Delta,\]
where the supremum is taken over all orthonormal $k$-frames $\{e_1, ..., e_k\}$ on $T_x\Delta$, and the
norm is induced by the Riemannian inner product at $T_{\sigma(x)}M$. It is clear that the definition of $\mathrm{vol}_k$ is independent of the choice of the Riemannian metric on $\Delta^k$.
\end{remark}

The main purpose of this paper is to follow the general approach of Besson-Courtois-Gallot \cite{besson-courtois-gallot}, and sharpen the above mass inequality of Gromov, where the linear constant will now depend on the critical exponent of $M$.

Given a complete connected Riemannian manifold $M$, write $\Gamma=\pi_1(M)$ for its fundamental group and $\widetilde M$ for its Riemannian universal cover. Let $\rho$ be the distance function on $\widetilde{M}$ and $O\in \widetilde{M}$ be a basepoint. The critical exponent of $M$ (or of $\Gamma$ associated to its action on $\widetilde M$) is defined to be
\[\delta=\inf\{s\in\mathbb{R}\mid\sum_{\gamma\in \Gamma}e^{-s\rho(O,\gamma O)}<\infty\}.\]
It is clear that the definition is independent on the choice of the basepoint.

Our main result states:

\begin{theorem}\label{main theorem intro}
Let $M$ be a complete connected Riemannian $n$-manifold, and let $\delta$ be the critical exponent of $M$. Then for all $1\leq k\leq n$ and for all $\alpha\in H_k(M, \mathbb{R})$, we have
	$$\|\alpha\|_1\leq \frac{\delta^kk!}{{k}^{k/2}}\cdot \mathrm{mass}(\alpha).$$
\end{theorem}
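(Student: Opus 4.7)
The strategy adapts the natural-map (``barycenter'') method of Besson--Courtois--Gallot~\cite{besson-courtois-gallot} from top-degree volume to the mass of an arbitrary-degree class, replacing the sectional-curvature hypothesis by control via the critical exponent $\delta$. By the dualities between the $\ell_1$-norm and bounded singular cochains, and between the mass and the comass of closed differential forms, it is enough to prove the following: for every bounded cocycle $\phi \in C^k_b(M;\R)$ with $\|\phi\|_\infty \leq 1$ and every $s > \delta$, the de Rham class of $\phi$ admits a closed representative $\omega_\phi^s$ on $M$ with comass at most $s^k k!/k^{k/2}$. The desired inequality then follows by taking the supremum over $\phi$ and letting $s \searrow \delta$.

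Fix $s > \delta$. By the definition of the critical exponent, $Z_s(x) := \sum_{\gamma\in\Gamma} e^{-s\rho(x,\gamma O)}$ converges for every $x \in \widetilde M$, and yields the $\Gamma$-equivariant family of probability measures
$$
\mu_x^s := \frac{1}{Z_s(x)}\sum_{\gamma\in\Gamma} e^{-s\rho(x,\gamma O)}\,\delta_{\gamma O}
$$
on the orbit $\Gamma\cdot O$. Embedding the orbit equivariantly into a real hyperbolic space $\H^N$ (via an appropriate representation of $\Gamma$) and forming the spherical barycenter of the push-forward of $\mu_x^s$ produces a $\Gamma$-equivariant $C^1$ natural map $\Phi_s \colon \widetilde M \to \H^N$. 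Lifting $\phi$ to a $\Gamma$-invariant bounded cocycle $\widetilde\phi$ on $\widetilde M$ and averaging the straight-simplex evaluations $\widetilde\phi\bigl(\mathrm{str}(\eta_0,\ldots,\eta_k)\bigr)$ against the product measure $(\mu_x^s)^{\otimes(k+1)}$, paired with the alternating $k$-Jacobian of the barycentric straight simplex at $x$, defines a $\Gamma$-invariant closed $k$-form $\omega_\phi^s$ on $\widetilde M$ that descends to $M$ and represents the de Rham image of $[\phi]$.

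The technical heart of the argument is the comass estimate
$$
\|\omega_\phi^s\|_{\mathrm{comass}} \leq \frac{s^k k!}{k^{k/2}}\,\|\widetilde\phi\|_\infty.
$$
Differentiating the implicit equation defining the spherical barycenter yields, at each $x$, a symmetric positive-semidefinite endomorphism whose trace on the image $d\Phi_s(V)$ of any $k$-plane $V \subset T_x\widetilde M$ is bounded by $s^2$, using the estimate $\int \rho(x,\eta)^2\,d\mu_x^s(\eta) \leq s^2$ that follows from the log-convexity of $Z_s$. Applying AM--GM to its $k$ eigenvalues restricted to $V$ yields the factor $k^{-k/2}$, while the factor $k!$ arises from integration over the standard $k$-simplex.

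The principal obstacle lies in this comass estimate: whereas the top-degree case $k=n$ reduces to the original BCG calculation, obtaining the sharp constant $k^{-k/2}$ in intermediate codimensions requires a delicate diagonalization combined with a Cauchy--Schwarz manipulation on the symmetric form restricted to an arbitrary $k$-subspace. A secondary subtlety is the precise construction of the natural form $\omega_\phi^s$: we must verify its convergence, closedness, and de Rham cohomology class simultaneously with the comass bound, which constrains the choice of the averaging kernel.
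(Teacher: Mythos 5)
Your overall reduction is sound and matches the paper's: by the duality $\|\alpha\|_1=\sup\{\langle\beta,\alpha\rangle:\|\beta\|_\infty\le 1\}$ it suffices to represent each bounded class by a closed form of comass at most $s^kk!/k^{k/2}$ and let $s$ decrease to the critical value, and the AM--GM/Cauchy--Schwarz diagonalization giving the factor $k^{-k/2}$ is exactly the computation in Lemma \ref{lem:k-Jac}. But the central construction you propose does not exist in the stated generality. You build your natural map by ``embedding the orbit equivariantly into a real hyperbolic space $\mathbb{H}^N$ via an appropriate representation of $\Gamma$'' and taking spherical barycenters there. For an arbitrary complete connected Riemannian manifold there is no such representation: $\pi_1(M)$ need not act on any $\mathbb{H}^N$ in a way that makes the orbit map a quasi-isometric (or even injective, proper) embedding, and without a CAT$(0)$/negative-curvature structure compatible with the $\Gamma$-action the barycenter of the pushed-forward measure need not be defined, unique, or $C^1$ in $x$. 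This is precisely the point where the paper diverges: the target is taken to be the unit sphere of $L^2(\widetilde M,\widetilde\mu)$ (where $\Gamma$ always acts isometrically, for a suitably decaying $\Gamma$-invariant measure $\widetilde\mu$) followed by the $2$-Lipschitz map $f\mapsto f^2\widetilde\mu$ into the affine space $\mathcal{M}_1$ of probability measures on $\widetilde M$; the affine structure of $\mathcal{M}_1$ replaces your hyperbolic straight simplices, and Lemma \ref{alternating and differential form} converts bounded multilinear cocycles there into closed forms whose comass is $k!$ times the sup norm. No barycenter and no auxiliary representation are needed.

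Two further points would need repair even granting a target. First, your trace estimate is misstated: the bound $\int\rho(x,\eta)^2\,d\mu_x^s(\eta)\le s^2$ is false in general (that integral is a second moment of the Poincar\'e series and typically blows up as $s\searrow\delta$); the correct mechanism is pointwise, namely $\|\operatorname{grad}_x e^{-s\rho(x,\eta)}\|\le s\,e^{-s\rho(x,\eta)}$ because $\|\operatorname{grad}\rho\|=1$, which after normalization gives $\operatorname{tr}\bigl((d\Phi^s_x)^*d\Phi^s_x\bigr)\le s^2$. Second, watch the bookkeeping of where $s$ tends: in the $L^2$ construction one squares the kernel, so convergence requires $2s>\delta$ and one lets $s\searrow\delta/2$; the resulting $(\delta/2)^k$ is exactly compensated by the Lipschitz constant $2^k$ of $f\mapsto f^2\widetilde\mu$, recovering $\delta^kk!/k^{k/2}$. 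Your version with $s\searrow\delta$ happens to produce the right constant only because the erroneous trace bound omits this factor of $2$.
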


\begin{remark}
	Besson, Courtois and Gallot prove essentially this statement for the fundamental class of \emph{closed} manifolds \cite[Theorems D and 3.16]{besson-courtois-gallot}. Note that under the condition $|K|\leq 1$ (or more generally $\mathrm{Ric}\geq -(n-1))$, we have $\delta\leq (n-1)$ according to the Bishop-Gromov inequality \cite[Lemma 7.1.3]{petersen}. Thus, our theorem recovers Besson, Courtois and Gallot's result in top degree (Theorem \ref{thm:BCG91}), and tightens Gromov's mass inequality (Theorem \ref{thm:Gromov_mass}) in all degrees.
\end{remark}
Our proof follows the strategy of Besson-Courtois-Gallot: it is based on Gromov's idea \cite[Section 2.4]{Gromov_1982} of using a \emph{smoothing operator} $\psi\colon\widetilde{M}\rightarrow \mathcal{M}_1$, an equivariant map from the universal cover of the manifold under consideration to its space of probability measures, to gain estimates on the $\ell_1$- and $\ell_\infty$-norms. We choose a particular family of smoothing operators $\psi^s=J\circ\Phi^s\colon \widetilde{M}\rightarrow \mathcal{M}_1$, study its properties and obtain the corresponding estimates. The particular choice  of $\psi^s$ makes the critical exponent of $M$ appear.

The paper is structured as follows. Section \ref{section:applications} contains corollaries and applications of our main result. In Section \ref{background} we recall useful definitions and prove the necessary lemmas towards the main theorem (Theorem \ref{main theorem intro} in this introduction and Theorem \ref{main theorem} later). Section \ref{section:proof of main theorem} is devoted to its proof.
\subsection*{Acknowledgements}
We would like to thank Christoforos Neofytidis for an early occasion to discuss this work and for suggesting Corollary \ref{cor:tight submanifolds}. We also thank Florent Balacheff, Bob Bell, Ulrich Bunke, Chris Connell, Jean-François Lafont, Clara Löh, George Raptis and Roman Sauer for helpful comments and discussions. We are grateful to the referee for their careful reading and useful suggestions.

C. C. acknowledges support from the Swiss National Science Foundation in the form of grant P400P2-191107 and support from the European Union in the form of a María Zambrano grant. S. W. thanks the Max Planck Institute for Mathematics in Bonn and Michigan State University for their hospitality, where part of this project was conducted.
\section{Applications}\label{section:applications}
\subsection{Norm vanishing results}
\begin{corollary}
	Let $M$ be a complete connected Riemannian manifold whose critical exponent $\delta_{\widetilde{M}}(\Gamma)$ is zero. Then 
	$\|\alpha\|_1=0$ for all $\alpha\in H_k(M, \mathbb{R})$, for all $k\geq 1$.
\end{corollary}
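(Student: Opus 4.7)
The plan is a direct application of Theorem \ref{main theorem intro}. Given $\alpha \in H_k(M, \mathbb{R})$ with $k \geq 1$, the theorem yields
\[
\|\alpha\|_1 \leq \frac{\delta^k\, k!}{k^{k/2}} \cdot \mathrm{mass}(\alpha).
\]
Since $\delta = 0$ and $k \geq 1$, the coefficient $\delta^k k!/k^{k/2}$ equals zero, so as soon as $\mathrm{mass}(\alpha)$ is known to be finite the conclusion $\|\alpha\|_1 = 0$ is immediate.

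The only thing that needs a brief justification is the finiteness of $\mathrm{mass}(\alpha)$. Any homology class in singular homology is represented by a finite real linear combination of singular simplices, and by standard smoothing one may choose a piecewise smooth representative $\sum_{i=1}^\ell a_i \sigma_i$. Each $\sigma_i\colon \Delta^k \to M$ has compact image in the smooth manifold $M$, so the $k$-Jacobian integral $\mathrm{vol}_k(\sigma_i) = \int_{\Delta^k} \mathrm{Jac}_k \sigma_i\, dV_\Delta$ is finite. Using the inequality recalled in the remark after Theorem \ref{thm:Gromov_mass},
\[
\mathrm{mass}(\alpha) \leq \sum_{i=1}^\ell |a_i|\, \mathrm{vol}_k(\sigma_i) < \infty.
\]
Hence the right-hand side of the main theorem is a product of $0$ and a finite quantity, and therefore vanishes.

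There is no real obstacle here, beyond checking that one does not encounter a $0 \cdot \infty$ ambiguity; this is ruled out by the previous paragraph. The hypothesis $k \geq 1$ is essential so that $\delta^k = 0$ rather than $0^0$; note that in degree $k=0$ the statement is of course false for any nonzero class, since a point has $\ell_1$-norm $1$.
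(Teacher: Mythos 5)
Your argument is correct, and it is the natural ``direct corollary'' route: you substitute $\delta=0$ into Theorem \ref{main theorem intro} and then rule out the only possible obstruction, namely a $0\cdot\infty$ indeterminacy, by observing that $\mathrm{mass}(\alpha)$ is always finite (a smooth representative $\sum_i a_i\sigma_i$ has each $\mathrm{vol}_k(\sigma_i)<\infty$ since $\mathrm{Jac}_k\sigma_i$ is bounded on the compact simplex, and one then invokes the inequality $\mathrm{mass}(\alpha)\leq\sum_i|a_i|\mathrm{vol}_k(\sigma_i)$ from the remark after Theorem \ref{thm:Gromov_mass}). This finiteness check is a genuine point that the paper does not spell out, so it is good that you addressed it. The paper, however, justifies the corollary by a different, purely group-theoretic route: for finitely generated $\Gamma$, the vanishing of $\delta_{\widetilde M}(\Gamma)$ forces subexponential growth, hence amenability of $\Gamma$, hence vanishing of bounded cohomology in positive degrees by Trauber's theorem, and then the $\ell_1$-norm vanishes by the duality principle and Gromov's mapping theorem. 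That route bypasses the mass entirely and connects the corollary to classical amenability results, but as written it relies on $\Gamma$ being finitely generated; your derivation applies to any complete connected Riemannian manifold without that hypothesis, at the cost of using the full quantitative strength of the main theorem. Both arguments are valid where they apply.
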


In the case where $\Gamma=\pi_1(M)$ is finitely generated, the critical exponent relates to the growth rate of the group. It is clear that for any choice of finite generating set $S$ of $\Gamma$, there exists a constant $K>0$ such that $$\delta_{\mathrm{Cay}(\Gamma, S)}(\Gamma)\leq K\cdot \delta_{\widetilde{M}}(\Gamma),$$
where $\delta_{\mathrm{Cay}(\Gamma, S)}$ is the exponential growth rate of $\Gamma$ with respect to the generating set $S$. Thus, $\delta_{\widetilde{M}}(\Gamma)=0$ implies that $\Gamma$ has subexponential growth. It follows that $\Gamma$ is amenable. It is well known since the work of Trauber that the bounded cohomology of an amenable group vanishes in all degrees at least $1$ \cite{johnson}. Hence by the duality principle and Gromov's mapping theorem \cite{Gromov_1982}, the $\ell_1$-norm vanishes identically in all degrees at least $1$ on the homology groups of a manifold with amenable fundamental group. Thus the corollary follows.

On the other hand, if $M$ is a closed Riemannian manifold whose fundamental group $\Gamma$ has subexponential growth, then $\delta_{\widetilde{M}}(\Gamma)$ vanishes since the universal cover $\widetilde{M}$ of $M$ is quasi-isometric to the Cayley graph $\mathrm{Cay}(\Gamma, S)$ of $\Gamma$ with respect to any finite generating set $S$. Thus, in this case our theorem partially recovers Trauber's result. This includes all compact nilmanifolds. Note that it is an open problem whether there exists a finitely presented group with intermediate growth.

\subsection{Bounds on submanifolds}
\begin{corollary}\label{cor:mapping statement}
Let $M$ be a complete connected Riemannian manifold and let $\delta$ be its critical exponent. Let $X$ be a path-connected topological space and $f\colon X\rightarrow M$ a continuous map that induces a surjection with amenable kernel on the level of fundamental groups. Then for every class $\alpha\in H_k(X, \mathbb{R})$, $k\geq 1$, we have 
$$\|\alpha\|_1\leq \frac{\delta^kk!}{k^{k/2}}\cdot\mathrm{mass}(f_*(\alpha)).$$
\end{corollary}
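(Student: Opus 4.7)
The plan is to reduce the statement to Theorem \ref{main theorem intro} by exploiting the invariance of the $\ell_1$-seminorm under maps that induce a surjection with amenable kernel on $\pi_1$. By functoriality of singular chains, the inequality $\|f_*(\alpha)\|_1 \leq \|\alpha\|_1$ is automatic: any cycle $\sum_i a_i \sigma_i$ representing $\alpha$ pushes forward to a cycle $\sum_i a_i(f\circ\sigma_i)$ representing $f_*(\alpha)$ of the same $\ell_1$-weight. For the reverse inequality I would invoke Gromov's mapping theorem from \cite{Gromov_1982}: under the hypothesis on $f$, the induced pullback $f^*\colon H_b^{\ast}(M;\bR) \to H_b^{\ast}(X;\bR)$ on bounded cohomology is an isometric isomorphism.

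Combining this with the $\ell_1/\ell_\infty$-duality principle for singular (co)homology, which expresses $\|\beta\|_1$ as the supremum of pairings against bounded cocycle classes of $\ell_\infty$-norm at most $1$, one obtains
\[
\|\alpha\|_1 \;=\; \sup\bigl\{\,\langle f^*c,\alpha\rangle \,:\, c\in H^k_b(M;\bR),\ \|c\|_\infty\leq 1\bigr\} \;=\; \|f_*(\alpha)\|_1.
\]
Applying Theorem \ref{main theorem intro} to the class $f_*(\alpha) \in H_k(M;\bR)$ then yields
\[
\|\alpha\|_1 \;=\; \|f_*(\alpha)\|_1 \;\leq\; \frac{\delta^k k!}{k^{k/2}}\cdot \mathrm{mass}(f_*(\alpha)),
\]
which is the claim.

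There is no substantial obstacle: all the geometric content has been packaged into the main theorem, and the reduction is precisely the classical mapping-theorem-plus-duality argument already used implicitly in the norm-vanishing discussion preceding this corollary. The only minor point is to confirm that Gromov's mapping theorem applies at the level of singular bounded cohomology for an arbitrary path-connected space $X$ (with no CW-hypothesis), but this is standard. The key conceptual observation is simply that the amenable-kernel hypothesis lets one transport the inequality from $H_k(M;\bR)$, where the geometric input of Theorem \ref{main theorem intro} lives, back to $H_k(X;\bR)$, where $X$ itself need carry no geometric structure at all.
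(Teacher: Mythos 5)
Your argument is correct and follows exactly the paper's own proof: apply Gromov's mapping theorem to get $\|\alpha\|_1=\|f_*(\alpha)\|_1$ and then invoke Theorem \ref{main theorem intro} for the class $f_*(\alpha)$. The extra detail you supply on the duality principle and the isometric isomorphism on bounded cohomology is a correct unpacking of what the paper cites in one line.
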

\begin{proof}
By Gromov's mapping theorem \cite{Gromov_1982} we obtain that $\|\alpha\|_1=\|f_*(\alpha)\|_1$. Hence, the $\ell_1$-norm of $\alpha$ is bounded by the same bound as $\|f_*(\alpha)\|_1$ is. We conclude by Theorem \ref{main theorem intro}. 
\end{proof}
Now, up to a factor, Thom's realization theorem ensures that rational homology classes can be represented by manifolds. This leads to the following estimate for the representing manifolds:
\begin{corollary}\label{cor:tight submanifolds}
Let $M$ be a complete connected Riemannian manifold with finitely presented fundamental group and let $\alpha\in H_k(M, \mathbb{Q})$ be a rational homology class, $k\geq 1$. Let $N$ be any oriented closed connected $k$-manifold $N$ representing the class $\alpha$, that is there exists a continuous map $f\colon N\rightarrow M$ and $r\in\mathbb{Q}\setminus \{0\}$ such that $f_\ast([N])=r\alpha$, and suppose additionally that $\pi_1(f)\colon N\rightarrow M$ is surjective with amenable kernel. Then we have 
$$\|N\|\leq |r|\cdot\frac{\delta^kk!}{k^{k/2}}\cdot\mathrm{mass}(\alpha).$$
\end{corollary}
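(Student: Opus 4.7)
The plan is to apply Corollary \ref{cor:mapping statement} directly to the map $f\colon N\to M$ and the fundamental class $[N]\in H_k(N,\mathbb{R})$. The hypotheses of that corollary are fulfilled by assumption: $N$ is path-connected (connected manifolds are path-connected), $f$ is continuous, and $\pi_1(f)$ is surjective with amenable kernel. The corollary then yields
\[\|N\| \;=\; \|[N]\|_1 \;\leq\; \frac{\delta^k k!}{k^{k/2}} \cdot \mathrm{mass}(f_*[N]),\]
using the standard identification of the simplicial volume of a closed oriented manifold with the $\ell_1$-norm of its fundamental class.

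Since $f_*[N] = r\alpha$ by hypothesis, the only remaining point is the scaling identity $\mathrm{mass}(r\alpha) = |r|\cdot\mathrm{mass}(\alpha)$ for every nonzero real $r$. This will follow routinely from the infimum definition of mass (recalled informally in the remark after Theorem \ref{thm:Gromov_mass} and made precise in Definition \ref{def:mass}): multiplying every coefficient of a piecewise smooth representative $\sum_i a_i\sigma_i$ of $\alpha$ by $r$ produces a representative of $r\alpha$ whose associated sum $\sum_i |a_i|\operatorname{vol}_k(\sigma_i)$ is scaled by $|r|$, and the symmetric argument with $1/r$ gives the reverse inequality. I expect no substantive obstacle here: the conceptual input---Gromov's mapping theorem together with the main mass inequality (Theorem \ref{main theorem intro})---has already been absorbed into Corollary \ref{cor:mapping statement}, so the present statement is essentially a bookkeeping consequence that repackages Corollary \ref{cor:mapping statement} when the domain is itself a closed oriented manifold representing, up to a nonzero rational scalar, the target class $\alpha$.
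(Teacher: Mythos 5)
Your proposal is correct and follows essentially the same route as the paper: both reduce to Corollary \ref{cor:mapping statement} (i.e.\ Gromov's mapping theorem plus Theorem \ref{main theorem intro}), the only cosmetic difference being that you extract the factor $|r|$ via the homogeneity $\mathrm{mass}(r\alpha)=|r|\,\mathrm{mass}(\alpha)$, while the paper extracts it via $\|r\alpha\|_1=|r|\,\|\alpha\|_1$. One small point: the scaling identity for mass is most cleanly read off from the supremum-over-forms definition in Definition \ref{def:mass}(3) rather than from the inequality in the remark after Theorem \ref{thm:Gromov_mass}, but it is immediate either way.
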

\begin{remark}
The existence of such an $N$ representing such an $\alpha$ is ensured by Thom's realization theorem. Moreover, if $k\leq 6$ or $k=n-1$ and $\alpha$ is an integral class, we can take $r=1$. For $k\geq 4$, surgery (see for example \cite[Theorem 3.1]{crowley-loeh}) allows to chose $N$ with an isomorphism $\pi_1(N)\cong\pi_1(M)$ induced by $f$. 
\end{remark}
\begin{proof}
This is a particular case of Corollary \ref{cor:mapping statement}. By Gromov's mapping theorem \cite{Gromov_1982} we obtain that $\|N\|=|r|\|\alpha\|_1$. Hence, the simplicial volume of $N$ is bounded by the same bound as $|r|\|\alpha\|_1$ is. We conclude as above. 
\end{proof}

\subsection{Double sided inequality}
As we see, our main theorem provides a linear upper bound on the Gromov norm in terms of the mass, where the linear constant depends on the critical exponent. On the other hand, if the complete manifold $M$ has pinched negative sectional curvature, then the Gromov norm is also linearly bounded from below by the mass. More generally, this is true for any $k$-homology class whenever the manifold admits a straightening and the straightened $k$-simplices have uniformly bounded $k$-volume. Given a complete manifold $M$ and its fundamental group $\Gamma$, we recall the following definition:

\begin{definition}\label{def:straightening}
	Let $S_k\colon\Delta_k(\widetilde M)\rightarrow \Delta_k(\widetilde M)$ be a family of maps for $k\in\{0, 1,..., n\}$, where $\Delta_k(\widetilde M)$ denotes the set of all singular simplices on $\widetilde M$. We say that the family $\{S_k\}$ is a \emph{straightening} on $M$ if 
	\begin{enumerate}
		\item $S_k$ is $\Gamma$-equivariant for any $k\in \{0,1,...,n\}$, that is, for any $\sigma\colon\Delta^k\rightarrow \widetilde M$ and any $\gamma\in \Gamma$, we have $\gamma S_k(\sigma)=S_k(\gamma \sigma)$. Thus, $S_k$ descends to a self map on $\Delta^k(M)$.
		\item The maps $S_k$, $k\in \{0,1,...,n\}$, induce a chain map $\mathrm{C}_*(\widetilde M,\R)\rightarrow \mathrm{C}_*(\widetilde M,\R)$ which is $\Gamma$-equivariantly chain homotopic to the identity. Thus, the $S_k$, $k\in \{0,1,...,n\}$, descend to a chain map on $\mathrm{C}_*(M,\R)$ which is homotopic to the identity.
	\end{enumerate}
\end{definition}

The following proposition is a well-known strategy to bound a Gromov norm from below.

\begin{proposition}\label{prop:straightening}
	Suppose $M$ admits a straightening, all straightened $k$-simplices are $C^1$-smooth and their volumes are uniformly bounded by a constant $C_k>0$ (possibly depending on $M$). Then, for any $\alpha\in H_k(M,\R)$, we have the inequality
	\[\|\alpha\|_1\geq \frac{1}{C_k}\operatorname{mass}(\alpha).\]
\end{proposition}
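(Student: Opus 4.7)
The plan is to use the straightening to replace an arbitrary singular cycle representing $\alpha$ by a piecewise smooth cycle with controlled $k$-volume of each simplex, and then exploit the inequality relating mass to volumes of smooth cycles recalled in the remark after Theorem \ref{thm:Gromov_mass}.

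More concretely, fix $\varepsilon>0$ and choose a singular cycle $c=\sum_{i=1}^\ell a_i\sigma_i$ representing $\alpha$ with $\sum_{i=1}^\ell |a_i|<\|\alpha\|_1+\varepsilon$. Applying the straightening componentwise, set $S_k(c)=\sum_{i=1}^\ell a_i\, S_k(\sigma_i)$. By condition (2) in Definition \ref{def:straightening}, $S_\ast$ is chain homotopic to the identity on $\mathrm{C}_\ast(M,\R)$, so $S_k(c)$ is again a cycle and represents the same class $\alpha$. Moreover, by hypothesis, each straightened simplex $S_k(\sigma_i)\colon\Delta^k\to M$ is $C^1$-smooth with $\mathrm{vol}_k(S_k(\sigma_i))\leq C_k$.

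Now invoke the inequality recorded in the remark following Theorem \ref{thm:Gromov_mass}: for any piecewise smooth cycle representing $\alpha$, the mass of $\alpha$ is bounded above by the weighted sum of the $k$-volumes of its simplices. Applied to $S_k(c)$, this gives
\[
\mathrm{mass}(\alpha)\;\leq\;\sum_{i=1}^\ell |a_i|\,\mathrm{vol}_k(S_k(\sigma_i))\;\leq\; C_k\sum_{i=1}^\ell |a_i|\;<\; C_k\bigl(\|\alpha\|_1+\varepsilon\bigr).
\]
Letting $\varepsilon\to 0$ yields $\mathrm{mass}(\alpha)\leq C_k\|\alpha\|_1$, which is the desired inequality.

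There is no real obstacle here; the proof is essentially a bookkeeping exercise once one has the two ingredients in place. The only mild point to be careful about is ensuring that passing from a general singular cycle to the straightened one does not change the homology class — this is exactly what condition (2) of Definition \ref{def:straightening} guarantees — and that the comparison between mass and weighted volumes is applicable, which requires only piecewise smoothness of the representing cycle, supplied by the smoothness assumption on the straightening.
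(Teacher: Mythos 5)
Your proof is correct and follows essentially the same route as the paper: straighten a representative cycle (using condition (2) of Definition \ref{def:straightening} to keep the homology class), then bound $\mathrm{mass}(\alpha)$ by the weighted sum of the $k$-volumes of the straightened simplices, each at most $C_k$. The only cosmetic difference is that you pick an $\varepsilon$-optimal representative up front and cite the mass-vs-volume inequality from the remark after Theorem \ref{thm:Gromov_mass}, whereas the paper takes an arbitrary representative, unwinds that inequality via the definition of mass as a supremum over forms of comass at most $1$, and takes the infimum at the end.
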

\begin{proof}
For every $\alpha=[\sum_i a_i\sigma_i]\in H_k(M,\mathbb{R})$, we have
\begin{align*}
\mathrm{mass}(\alpha)&=\sup_{\beta,\, \mathrm{comass}(\beta)\leq 1}\int_\alpha \beta\\
&=\sup_{\beta,\, \mathrm{comass}(\beta)\leq 1}\int_{[\sum_i a_i\sigma_i]}\beta\\
&=\sup_{\beta,\, \mathrm{comass}(\beta)\leq 1}\int_{[\sum_i a_i S_k(\sigma_i)]}\beta\\
&\leq\sup_{\beta, \,\mathrm{comass}(\beta)\leq 1}\sum_i|a_i|\mathrm{comass}(\beta)\mathrm{vol}_k(S_k(\sigma_i))\\
&\leq C_k\cdot \sum_i|a_i|,
\end{align*}
where the third equality uses $(2)$ of Definition \ref{def:straightening}.
Finally, after taking the infimum over all representatives of $\alpha$, we obtain 
$$\mathrm{mass}(\alpha)\leq C_k\cdot \|\alpha\|_1,$$
hence the proposition follows.
\end{proof}

When $M$ has non-positive curvature, there are known results where the conditions of Proposition \ref{prop:straightening} are satisfied. We summarize them below: Proposition \ref{prop:straightening} holds if
\begin{itemize}
	\item \cites{Gromov_1982, Thurston_1979} $M$ is real hyperbolic and $k\geq 2$, where $C_k$ is the maximal volume of an ideal $k$-simplex in $\mathbb H^n$. The constant $C_k$ is explicitly estimated in \cite{Haagerup-Munkholm}.
	\item \cite{Inoue-Yano} $M$ has sectional curvature bounded away from zero and $k\geq 2$, where $C_k$ is explicitly estimated in terms of the curvature bound.
	\item \cites{Lafont-Wang, Wang} $M$ is a locally symmetric space of non-compact type (without certain small factors) and $k\geq \operatorname{srk}(\widetilde M)+2$, where $\operatorname{srk}(\widetilde M)$ is explicitly computed in \cite[Table 1]{Wang}, and $C_k$ depends on $\widetilde M$.
	\item \cite{Connell-Wang} $M$ satisfies $\operatorname{Ric}_{\ell+1}<0$ and $k\geq 4\ell$, and $C_k$ depends on $\widetilde M$.
\end{itemize}
Note that the first two results use the geodesic straightening, and the last two results use the barycentric straightening introduced in \cite{Lafont-Schmidt}.
\begin{corollary}
	If the conditions of Proposition \ref{prop:straightening} are satisfied, then for every $\alpha\in H_k(M,\R)$, $k\geq 1$, we have double sided inequalities
	\[\frac{1}{C_k}\operatorname{mass}(\alpha)\leq \|\alpha\|_1\leq \frac{\delta^kk!}{{k}^{k/2}}\mathrm{mass}(\alpha).\]
	In particular, if
	\[\delta< \left(\frac{k^{k/2}}{C_k k!}\right)^{1/k},\]
	then any $\alpha\in H_k(M,\R)$ satisfies
    \[\|\alpha\|_1=\mathrm{mass}(\alpha)=0.\]
\end{corollary}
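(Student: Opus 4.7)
The plan is to combine Theorem~\ref{main theorem intro} and Proposition~\ref{prop:straightening} directly, with no further geometric input. First I would invoke Proposition~\ref{prop:straightening}, which under the standing hypotheses ($M$ admits a straightening whose $k$-simplices are $C^1$-smooth and have $k$-volume bounded by $C_k$) yields $\mathrm{mass}(\alpha)\leq C_k\|\alpha\|_1$, i.e.\ $\|\alpha\|_1\geq \frac{1}{C_k}\mathrm{mass}(\alpha)$. Next I would apply Theorem~\ref{main theorem intro}, which requires no extra hypothesis beyond completeness and provides the upper bound $\|\alpha\|_1\leq \frac{\delta^kk!}{k^{k/2}}\mathrm{mass}(\alpha)$. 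Concatenating the two bounds produces exactly the claimed double-sided inequality.

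For the ``in particular'' clause, I would note that the assumed inequality $\delta<\bigl(\tfrac{k^{k/2}}{C_kk!}\bigr)^{1/k}$ is equivalent, after raising both sides to the $k$-th power and rearranging, to the strict inequality $\frac{\delta^kk!}{k^{k/2}}<\frac{1}{C_k}$. Plugging this into the double-sided inequality I would derive the chain
\[
\frac{1}{C_k}\mathrm{mass}(\alpha)\leq \|\alpha\|_1\leq \frac{\delta^kk!}{k^{k/2}}\mathrm{mass}(\alpha)<\frac{1}{C_k}\mathrm{mass}(\alpha).
\]
Since $\mathrm{mass}(\alpha)\geq 0$, this chain is self-contradictory unless $\mathrm{mass}(\alpha)=0$; in that case the upper bound then forces $\|\alpha\|_1=0$ as well.

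I do not anticipate any substantive obstacle: both ingredients have been established earlier in the paper, and the argument is a purely formal concatenation. The only care-points are verifying the simple algebraic equivalence between the condition on $\delta$ and the strict comparison of the two linear constants, and noting that it is the hypothesis $\mathrm{mass}(\alpha)\geq 0$ (rather than any positivity) that makes the collapse to zero legitimate rather than a vacuous chain of inequalities.
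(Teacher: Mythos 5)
Your proposal is correct and is exactly the argument the paper intends: the corollary is stated without a written proof precisely because it is the immediate concatenation of Proposition~\ref{prop:straightening} (lower bound) with Theorem~\ref{main theorem intro} (upper bound), and the vanishing clause follows from the strict comparison of the two constants together with $\mathrm{mass}(\alpha)\geq 0$, just as you describe. No gaps.
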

\begin{remark}
When $M$ is hyperbolic, we can show that
\[\left(\frac{k^{k/2}}{C_k k!}\right)^{1/k}<k-1.\]
Now by the vanishing homology theorem of Kapovich \cite{Kapovich}, if $\delta<k-1$ the only possible non-trivial homology classes come from the cusps of $M$, whose Gromov norm is automatically zero. The second part of the above Corollary thus follows immediately. On the other hand, by a theorem of Mineyev \cite{mineyev}, if $\pi_1(M)$ is Gromov hyperbolic, then any non-trivial $k$-homology class (where $k\geq 2$) has positive Gromov norm. So together with the above corollary, it is plausible to obtain similar homology vanishing results for certain groups (e.g. if the comparison map is known to be surjective in certain degrees). We suspect there are interesting applications when $M$ has non-positive curvature.
\end{remark}

\section{Background}\label{background}
We collect here the notation, the definitions and the preparatory results for the proof of Theorem \ref{main theorem intro}. The proof itself will be given in Section \ref{section:proof of main theorem} (Theorem \ref{main theorem}).

Let $M$ be a complete connected Riemannian manifold with a Riemannian metric $g$. Let $\Gamma$ be its fundamental group, $\widetilde{M}$ its universal cover. We write $\rho\colon \widetilde{M}\times\widetilde{M}\rightarrow \mathbb{R}_{\geq 0}$ for the distance on $\widetilde{M}$ induced by a lift of $g$, and $\rho_x$ for the function $\rho(x, -)$, $x\in \widetilde{M}$. Recall the following definition of the critical exponent.

\begin{definition}\label{def:critical_exp}
Let $M$ be a connected Riemannian manifold. With the above notation, the \emph{critical exponent} of $M$ (or of $\Gamma$ associated to its action on $\widetilde M$) is defined as
	\[\delta=\delta_{\widetilde M}(\Gamma):=\inf\{s\in \mathbb{R}\mid\sum_{\gamma\in \Gamma}e^{-s\rho(O,\gamma O)}<\infty\},\]
	where $O\in \widetilde{M}$ is any chosen basepoint in $\widetilde{M}$. By the triangle inequality, it is clear that $\delta$ is independent of the choice of $O$.
\end{definition}

Let $o\in M$ be a chosen basepoint in $M$. Let $\mu$ be a finite positive measure on $M$ chosen to decay fast enough so that
\begin{equation}\label{eq:fastdecay}
	\int_{M}e^{(\delta+1)d(o,x)}d\mu(x)<\infty,
\end{equation}
where $d$ is the distance function on $M$ associated with $g$. By the triangle inequality again, it is clear that the condition on $\mu$ does not depend on the basepoint $o$. We can further choose $\mu$ to be absolutely continuous with respect to the volume measure on $M$. We denote by $\widetilde \mu$ the lift of $\mu$ to $\widetilde M$: it is a positive measure on $\widetilde{M}$, absolutely continuous with respect to the lift of the volume measure induced by $g$, and $\Gamma$-invariant.

\begin{lemma}\label{lem:L2_convergent}
	Following the notations above, if $M$ is a complete connected Riemannian manifold whose critical exponent equals $\delta$, then for every $x\in \widetilde{M}$, the function $\varphi_x^s(y)=e^{-s\rho_x(y)}$ belongs to $L^2(\widetilde M,\widetilde \mu)$ whenever $\delta/2<s\leq(\delta+1)/2$.
\end{lemma}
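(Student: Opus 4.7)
The plan is to show directly that $\int_{\widetilde M} e^{-2s\rho_x(y)}\,d\widetilde\mu(y)<\infty$ for $s$ in the stated range, by decomposing $\widetilde M$ into $\Gamma$-translates of a Dirichlet fundamental domain $F$ centered at a chosen lift $\tilde o\in\widetilde M$ of $o$. Using the $\Gamma$-invariance of $\widetilde\mu$, the integral rewrites as a sum over $\gamma\in\Gamma$ of integrals over $F$:
\[
\int_{\widetilde M} e^{-2s\rho(x,y)}\,d\widetilde\mu(y)=\sum_{\gamma\in\Gamma}\int_F e^{-2s\rho(x,\gamma z)}\,d\widetilde\mu(z).
\]

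The key estimate comes from two applications of the triangle inequality. First, for $z\in F$ and $\gamma\in\Gamma$, I would use $\rho(x,\gamma z)\geq \rho(x,\gamma\tilde o)-\rho(\gamma\tilde o,\gamma z)=\rho(x,\gamma\tilde o)-\rho(\tilde o,z)$, and then $\rho(x,\gamma\tilde o)\geq \rho(\tilde o,\gamma\tilde o)-\rho(\tilde o,x)$, yielding
\[
e^{-2s\rho(x,\gamma z)}\leq e^{2s\rho(\tilde o,x)}\cdot e^{-2s\rho(\tilde o,\gamma\tilde o)}\cdot e^{2s\rho(\tilde o,z)}.
\]
Substituting this back separates the estimate into a group-theoretic factor $\sum_{\gamma\in\Gamma} e^{-2s\rho(\tilde o,\gamma\tilde o)}$ and a measure-theoretic factor $\int_F e^{2s\rho(\tilde o,z)}\,d\widetilde\mu(z)$, with the constant $e^{2s\rho(\tilde o,x)}$ accounting for the dependence on $x$.

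To conclude I would exploit the two hypotheses with the two inequalities defining the range of $s$. The condition $s>\delta/2$ gives $2s>\delta$, which by Definition \ref{def:critical_exp} makes $\sum_{\gamma\in\Gamma} e^{-2s\rho(\tilde o,\gamma\tilde o)}$ finite. For the second factor, the Dirichlet fundamental domain $F$ is characterized by $\rho(\tilde o,z)=d(o,\pi(z))$ for all $z\in F$ (where $\pi\colon\widetilde M\to M$ is the projection), so this integral equals $\int_M e^{2s\,d(o,w)}\,d\mu(w)$; the condition $s\leq(\delta+1)/2$ gives $2s\leq \delta+1$, and the hypothesis (\ref{eq:fastdecay}) on $\mu$ then provides finiteness. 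Combining the two, $\varphi_x^s\in L^2(\widetilde M,\widetilde\mu)$.

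The only potentially delicate point will be the appeal to a well-behaved fundamental domain ensuring $\rho(\tilde o,z)=d(o,\pi(z))$ on $F$; a Dirichlet domain works, but one must make sure $F$ is measurable and that overlaps on the boundary have $\widetilde\mu$-measure zero, which follows from $\widetilde\mu$ being absolutely continuous with respect to the Riemannian volume. The argument otherwise is a clean two-step triangle inequality estimate matched exactly to the two endpoints of the range $(\delta/2,(\delta+1)/2]$.
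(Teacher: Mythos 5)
Your proof is correct and follows essentially the same route as the paper: unfold the integral over $\Gamma$-translates of a fundamental domain, then use the triangle inequality to factor the estimate into the Poincar\'e series at exponent $2s$ (finite since $2s>\delta$) times an integral over $M$ controlled by the decay hypothesis \eqref{eq:fastdecay} (since $2s\le\delta+1$). The only cosmetic difference is that the paper works with an arbitrary fundamental domain and uses the $\Gamma$-invariance of the Poincar\'e series $F^s$ (via a Harnack-type inequality) to descend to $M$, whereas you keep $x$ general and invoke a Dirichlet domain to get $\rho(\tilde o,z)=d(o,\pi(z))$ on $F$; both are valid, and the measurability caveat you raise is indeed settled by the absolute continuity of $\widetilde\mu$.
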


\begin{proof}
	By triangle inequality, it suffices to show that the function $\varphi_O^s$ is in $L^2(\widetilde M,\widetilde \mu)$. We choose a fundamental domain $\mathfrak F\subset \widetilde{M}$. Then the square of the $L^2$-norm of $\varphi_O^s$
	 is given by
	\begin{align*}
		\|\varphi_O^s\|^2_{L^2}&=\int_{\widetilde M}e^{-2s\rho_O(x)}d\widetilde\mu(x)\\
		&=\sum_{\gamma\in \Gamma}\int_{\gamma \mathfrak F} e^{-2s\rho_O(x)}d\widetilde\mu(x)\\
		&=\sum_{\gamma\in \Gamma}\int_{\mathfrak F} e^{-2s\rho_{\gamma^{-1}O}(x')}d\widetilde\mu(x')\quad \textit{ substituting }x=\gamma x'\\
		&=\int_{\mathfrak F} \sum_{\gamma\in \Gamma}  e^{-2s\rho_{\gamma^{-1}O}(x')}d\widetilde\mu(x')
	\end{align*}
where the last equality uses the dominated convergence theorem, provided the integral of the function
$$\int_{\mathfrak F} \sum_{\gamma\in \Gamma}  e^{-2s\rho_{\gamma^{-1}O}(x')}d\widetilde\mu(x')$$ is finite, which we are going to show in the next few steps. Let
\[F^s(x)=\sum_{\gamma\in \Gamma}  e^{-2s\rho_{\gamma^{-1}O}(x)}.\]
By the definition of the critical exponent together with the triangle inequality, we know $F^s(x)<\infty$ whenever $s>\delta/2$. Also, it is clear that $F^s(x)$ is $\Gamma$-invariant, hence it descends to a function $G^s$ on $M$. Moreover, for each $x,x'\in \widetilde M$, when estimating $F^s(x)/F^s(x')$, by possibly replacing $x, x'$ by elements in the same $\Gamma$-orbit, we may assume $\rho(x,x')=d(\pi(x),\pi(x'))$ where $\pi\colon\widetilde M\rightarrow M$ denotes the universal covering map. Thus,
\begin{align*}
	\frac{F^s(x)}{F^s(x')}&=\frac{\sum_{\gamma\in \Gamma}  e^{-2s\rho_{\gamma^{-1}O}(x)}}{\sum_{\gamma\in \Gamma}  e^{-2s\rho_{\gamma^{-1}O}(x')}}\\
	&\leq \frac{e^{2s\rho(x,x')}\sum_{\gamma\in \Gamma}  e^{-2s\rho_{\gamma^{-1}O}(x')}}{\sum_{\gamma\in \Gamma}  e^{-2s\rho_{\gamma^{-1}O}(x')}}\\
	&=e^{2s\rho(x,x')}.
\end{align*}
By setting $x'=O$, we obtain that
\[F^s(x)\leq e^{2s\rho(x,O)}F^s(O),\]
and so
\[G^s(\pi(x))\leq e^{2sd(\pi (x),\pi(O))}G^s(\pi(O)).\]
Therefore, we can further estimate
\begin{align*}
	\|\varphi^s_O\|^2_{L^2}&=\int_{\mathfrak F} F^s(x) d\widetilde\mu(x)\\
	&=\int_M G^s(z)d\mu(z)\\
	&\leq \int_M e^{2sd(z,\pi(O))}G^s(\pi(O))d\mu(z)\\
	&<\infty,
\end{align*}
where the last inequality uses \eqref{eq:fastdecay} provided $s\leq (\delta+1)/2$.
\end{proof}
\begin{definition}\label{def:Phi_s}
We define the following maps $\Phi^s\colon\widetilde{M}\rightarrow S^\infty\subset L^2(\widetilde{M}, \widetilde{\mu})$:
$$x\longmapsto \left\{y\longmapsto \frac{\varphi_x^s(y)}{\|\varphi_x^s\|_{L^2}}\right\},$$
where $S^\infty$ denotes the unit sphere of $L^2(\widetilde{M}, \widetilde{\mu})$, $s$ satisfies $\frac{\delta+1}{2}\geq s>\frac{\delta}{2}$ and $\delta$ is the critical exponent of  $\Gamma$ for its action on $\widetilde{M}$.
Under this assumption, the $\Phi^s(x)$ are $L^2$-functions by Lemma \ref{lem:L2_convergent}. They are of norm $1$ by definition and it is straightforward to check that they are $\Gamma$-equivariant.
\end{definition}
We show an estimate on the $k$-Jacobian of $\Phi^s$, which will be useful in the proof of the main theorem:
\begin{lemma}\label{lem:k-Jac}
	Given a complete connected Riemannian $n$-manifold $M$ whose critical exponent is $\delta$, let $\Phi^s$ be the map defined above. For any integer $2\leq k\leq n$, we have
	\[|\operatorname{Jac}_k\Phi^s|\leq \left(\frac{s^2}{k}\right)^{k/2}.\]
\end{lemma}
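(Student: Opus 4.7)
The plan is to compute the Gram matrix of the differential $d\Phi^s_x$ in $L^2(\widetilde M,\widetilde\mu)$ and then apply Hadamard's inequality together with the AM-GM inequality. The factor $(s^2/k)^{k/2}$ will arise because the unit norm constraint $|\nabla_x\rho(x,y)|=1$ forces the sum of the diagonal entries (before taking the product) to be bounded by $1$, and AM-GM converts this sum-bound into the sharp product-bound $1/k^k$.

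First I would compute the differential. Writing $\varphi_x(y)=e^{-s\rho(x,y)}$ and $\psi_x=\varphi_x/\|\varphi_x\|_{L^2}$, the quotient rule gives
\[
 d\Phi^s_x(v)=\frac{1}{\|\varphi_x\|_{L^2}}\,P_{\psi_x^\perp}\!\bigl(d_x\varphi_x(v)\bigr),
\]
where $P_{\psi_x^\perp}$ is the orthogonal projection onto the hyperplane $\psi_x^\perp\subset L^2(\widetilde M,\widetilde\mu)$. Since $d_x\varphi_x(v)(y)=-s\,\langle\nabla_x\rho(x,y),v\rangle\,\varphi_x(y)$, writing $\xi_v(y):=\langle\nabla_x\rho(x,y),v\rangle$ we obtain the clean formula
\[
 d\Phi^s_x(v)=-s\,P_{\psi_x^\perp}(\xi_v\cdot\psi_x).
\]

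Next I fix an orthonormal $k$-frame $\{e_1,\dots,e_k\}$ at $T_x\widetilde M$ and set $\xi_i=\xi_{e_i}$. A direct calculation then identifies the Gram matrix $G_{ij}=\langle d\Phi^s_x(e_i),d\Phi^s_x(e_j)\rangle_{L^2}$ as $s^2$ times the covariance matrix of the functions $\xi_i$ with respect to the probability measure $d\nu:=\psi_x^2\,d\widetilde\mu$:
\[
 G_{ij}=s^2\!\int_{\widetilde M}(\xi_i-a_i)(\xi_j-a_j)\,d\nu,\qquad a_i=\int\xi_i\,d\nu.
\]
Since $G$ is positive semidefinite, Hadamard's inequality gives $\det G\le \prod_i G_{ii}$, and dropping the non-negative quantities $a_i^2$ yields
\[
 \det G\le s^{2k}\prod_{i=1}^{k}\int_{\widetilde M}\xi_i^2\,d\nu.
\]

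To finish, I apply AM-GM to the product and use the crucial geometric fact $\sum_{i=1}^k\xi_i(y)^2\le|\nabla_x\rho(x,y)|^2=1$ (valid almost everywhere, away from the cut locus, which has measure zero):
\[
 \prod_{i=1}^{k}\int\xi_i^2\,d\nu\le\left(\frac{1}{k}\sum_{i=1}^k\int\xi_i^2\,d\nu\right)^{\!k}\le \left(\frac{1}{k}\right)^{\!k}.
\]
Combining these gives $\det G\le (s^2/k)^k$, hence $|\operatorname{Jac}_k\Phi^s|=\sqrt{\det G}\le(s^2/k)^{k/2}$, as desired. The only subtle point is the measure-theoretic regularity of $\rho_x$ (justifying differentiation under the integral and using $|\nabla_x\rho|=1$ a.e.), but this is standard since distance functions on a complete Riemannian manifold are $1$-Lipschitz and smooth off a closed set of measure zero; no new ideas are needed beyond carefully managing the cut locus.
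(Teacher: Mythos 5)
Your proof is correct and follows essentially the same route as the paper's: both exploit the orthogonality of $d\Phi^s_x(v)$ to $\varphi^s_x$ coming from the unit-sphere constraint, then convert the product bound into a trace bound via AM--GM, and finally use $|\nabla_x\rho|=1$ a.e.\ together with the fact that $\psi_x^2\,d\widetilde\mu$ is a probability measure to bound that trace by $s^2$. The only cosmetic difference is that you run the linear algebra through the Gram matrix and Hadamard's inequality for a fixed orthonormal $k$-frame, whereas the paper works with the singular values of $d\Phi^s_x$ and Cauchy--Schwarz; the estimates are identical.
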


\begin{proof}
	Fix any $x\in \widetilde M$ and any $X\in T_x\widetilde M$. Recall that $\varphi_x^s=e^{-s\rho_x(\cdot)}$ and $\Phi^s(x)=\varphi_x^s/\|\varphi_x^s\|_{L^2}$ belong to $L^2(\widetilde M,\widetilde \mu)$ by Lemma \ref{lem:L2_convergent}. We will write $\|\cdot\|_{L^2}=\|\cdot\|$ for short. We compute
	\begin{align*}
		d\Phi^s_x(X)=d(\frac{\varphi_x^s}{\|\varphi_x^s\|})(X)=\frac{d\varphi_x^s(X)\cdot\|\varphi_x^s\|-\varphi_x^s\cdot d\|\varphi_x^s\|(X)}{\|\varphi_x^s\|^2}.
	\end{align*}
    Note that $L^2(\widetilde M,\widetilde \mu)$ is an affine space, so its tangent spaces can be naturally identified with itself. Under this identification,  it is clear that $d\|\varphi_x^s\|(X)\in L^2(\widetilde M,\widetilde \mu)$ is a constant function, so $\varphi_x^s\cdot d\|\varphi_x^s\|(X)$ is parallel to $\varphi_x^s$. Since $\varphi_x^s/\|\varphi_x^s\|\in S^\infty\subset L^2(\widetilde M,\widetilde \mu)$, we know $d(\frac{\varphi_x^s}{\|\varphi_x^s\|})(X)$ is orthogonal to $\varphi_x^s$. Thus by the pythagorean theorem, we have
    \[\|d\Phi^s_x(X)\|^2\leq \frac{\|d\varphi_x^s(X)\|^2}{\|\varphi_x^s\|^2}.\]
    Suppose that, under a proper choice of orthonormal bases at $T_x\widetilde M$ and at its image space in $T_{\Phi^s(x)}(L^2(\widetilde M,\widetilde \mu))$, $d\Phi_x^s$ is diagonalized to have eigenvalues $0\leq |\lambda_n|\leq \cdots\leq |\lambda_1|$. Then we know that $|\operatorname{Jac}_k\Phi^s|=\prod_{i=1}^k|\lambda_i|$. Thus by the geometric-arithmetic mean inequality and the Cauchy-Schwarz inequality, provided $k\geq 2$ we obtain that
    \begin{align*}
    	\prod_{i=1}^k|\lambda_i|&\leq \left(\frac{ \sum_{i=1}^k|\lambda_i|}{k}\right)^k\\
    	&\leq \left(\frac{(\sum_{i=1}^k|\lambda_i|^2)^{1/2}\cdot \sqrt k}{k}\right)^k\\
    	&\leq \left(\frac{\operatorname{tr}((d\Phi^s_x)^*\circ d\Phi^s_x)}{k}\right)^{k/2}.
    \end{align*}
Finally we can estimate $\operatorname{tr}((d\Phi^s_x)^*\circ d\Phi^s_x)$ by choosing an orthonormal basis $\{e_1,..., e_n\}$ on $T_x\widetilde M$:
\begin{align*}
	\operatorname{tr}((d\Phi^s_x)^*\circ d\Phi^s_x)&= \sum_{i=1}^n \|d\Phi_x^s(e_i)\|^2\\
	&\leq \frac{\sum_{i=1}^n\|d\varphi_x^s(e_i)\|^2}{\|\varphi_x^s\|^2}\\
	&=\frac{\sum_{i=1}^n s^2\int_{\widetilde M} e^{-2s\rho_x(z)}d\rho_{x,z}^2(e_i)d\widetilde \mu(z)}{\|\varphi_x^s\|^2}\\
	&=s^2.
\end{align*}
Therefore, combining the above inequalities, we have
\[|\operatorname{Jac}_k\Phi^s|\leq \left(\frac{s^2}{ k}\right)^{k/2}.\hfill\qedhere\]
\end{proof}

\begin{definition}
\begin{enumerate}
\item We denote by $\mathcal{M}$ the vector space of bounded measures on $\widetilde{M}$. It is endowed with the \emph{total variation norm}: by the Jordan decomposition theorem, every measure $\mu$ can be uniquely decomposed into two positive measures, its positive and its negative part $\mu_+$ and $\mu_-$, so that $\mu=\mu_+-\mu_-$. The total variation of $\mu$ is then
$$|\mu|=\int_{\widetilde{M}}\mu_+ +\int_{\widetilde{M}}\mu_-\geq 0.$$
\item We denote by $\mathcal{M}_1\subset\mathcal{M}$ the subspace of probability measures on $\widetilde{M}$.
\end{enumerate}
\end{definition}
We now define a map
\begin{align*}
J\colon S^\infty\subset L^2(\widetilde{M}, \widetilde{\mu})\longrightarrow &\mathcal{M}_1\\
f\longmapsto &f^2\widetilde{\mu}.
\end{align*}

\begin{lemma}\label{map J 2-lipschitz}
The map $J$ is $2$-Lipschitz, where the distance on $\mathcal{M}_1$ is inherited from $\mathcal{M}$.
\end{lemma}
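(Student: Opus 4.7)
The plan is to compute the total variation of $J(f)-J(g)=(f^2-g^2)\widetilde\mu$ directly, using the factorization $f^2-g^2=(f-g)(f+g)$ to reduce the bound to an $L^2$-estimate via Cauchy--Schwarz.

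First I would observe that for any $h\in L^1(\widetilde M,\widetilde\mu)$, the signed measure $h\widetilde\mu$ admits the Jordan decomposition $(h_+\widetilde\mu)-(h_-\widetilde\mu)$ where $h_\pm$ denote the positive and negative parts of $h$, so its total variation is $\int_{\widetilde M}|h|\,d\widetilde\mu$. Applying this to $h=f^2-g^2$, I get
\[
\bigl|J(f)-J(g)\bigr|=\int_{\widetilde M}\bigl|f^2-g^2\bigr|\,d\widetilde\mu=\int_{\widetilde M}|f-g|\cdot|f+g|\,d\widetilde\mu.
\]

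Next I would apply the Cauchy--Schwarz inequality to the two factors $|f-g|$ and $|f+g|$, yielding
\[
\int_{\widetilde M}|f-g|\cdot|f+g|\,d\widetilde\mu\leq \|f-g\|_{L^2}\cdot\|f+g\|_{L^2}.
\]
Finally, since $f,g\in S^\infty$ have $L^2$-norm equal to $1$, the triangle inequality gives $\|f+g\|_{L^2}\leq \|f\|_{L^2}+\|g\|_{L^2}=2$, so altogether $\bigl|J(f)-J(g)\bigr|\leq 2\,\|f-g\|_{L^2}$, which is exactly the $2$-Lipschitz bound.

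There is no real obstacle here: the argument is a two-line computation once one recognizes the total variation of an absolutely continuous signed measure as the $L^1$-norm of its density. The only mild point to verify is that $f^2-g^2$ is indeed in $L^1(\widetilde M,\widetilde\mu)$, which follows from $f,g\in L^2$ and Cauchy--Schwarz, so that the Jordan decomposition of $(f^2-g^2)\widetilde\mu$ makes sense and the total variation equals $\int|f^2-g^2|\,d\widetilde\mu$.
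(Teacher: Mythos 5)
Your proof is correct and follows essentially the same route as the paper: total variation of $(f^2-g^2)\widetilde\mu$ equals the $L^1$-norm of the density, then factor and apply Cauchy--Schwarz. The only cosmetic difference is that you bound $\|f+g\|_{L^2}\leq 2$ by the triangle inequality directly, whereas the paper expands $\int|f+g|^2\,d\widetilde\mu$ and bounds the cross term by Cauchy--Schwarz; these are the same estimate.
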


\begin{proof}
We compute
	\begin{align*}
		d_\mathcal M(J(f),J(g))= d_\mathcal M(f^2\widetilde{\mu},g^2\widetilde{\mu})&\leq \int_{\widetilde M}|f^2-g^2| d\widetilde{\mu}\\
		&\leq \left(\int_{\widetilde M}|f-g|^2 d\widetilde{\mu}\right)^{1/2} \left(\int_{\widetilde M}|f+g|^2 d\widetilde{\mu}\right)^{1/2}\\
		&=d_{L^2}(f,g) \cdot \left(\int_{\widetilde M}|f+g|^2 d\widetilde{\mu}\right)^{1/2}.
	\end{align*}
We can also estimate
	\begin{align*}
	\int_{\widetilde M}|f+g|^2 d\widetilde{\mu}&=  	\int_{\widetilde M}f^2 d\widetilde{\mu}+	\int_{\widetilde M}g^2 d\widetilde{\mu}+	2\int_{\widetilde M}fg d\widetilde{\mu}\\
	&\leq 2+2\left(\int_{\widetilde M}f^2 d\widetilde{\mu}\right)^{1/2}\left(\int_{\widetilde M}g^2 d\widetilde{\mu}\right)^{1/2}\\
	&=4.
\end{align*}
Thus by combining the two inequalities, we obtain that $J$ is $2$-Lipschitz.
\end{proof}

\begin{definition}\label{def:mass}
We recall a few useful definitions below.
\begin{enumerate}
\item Let $\omega$ be a $k$-differential form on the manifold $M$. Its \emph{comass} is defined as
$$\mathrm{comass}(\omega)=\sup_{p\in M}\left\{\sup\left\{\omega_p(x_1, ..., x_k) \,\mid\, x_1, ..., x_k\in T_pM \mbox{ unit vectors}\right\}\right\}.$$
\item Let $\beta\in H^k(M, \mathbb{R})$ be a cohomology class. By de Rham's theorem, it corresponds to a de Rham class of closed differential $k$-forms $\omega\in \Omega^k(M)$. The \emph{comass} of $\beta$ is by definition
$$\mathrm{comass}(\beta)=\inf_{[\omega]=\beta}\mathrm{comass(\omega)}.$$
Note that for $k=0$, $\mathrm{comass}(\beta)=\|\beta\|_\infty$.
\item Let $\alpha\in H_k(M, \mathbb{R})$ be a homology class. Its \emph{mass} is defined as
$$\mathrm{mass}(\alpha)=\sup\left\{\int_\alpha \omega \,\mid\,\omega \text{ closed differential $k$-form on $M$, } \mathrm{comass}(\omega)\leq 1\right\}.$$
Note that for $k=0$, $\mathrm{mass}(\alpha)=\|\alpha\|_1$.
\item Let $a$ be an alternating $(k+1)$-form on a vector space $V$. Its \emph{sup norm} is defined as 
$$\|a\|_\infty=\sup_{v_0, v_1, ..., v_k\in V}a(v_0, v_1,..., v_k).$$
\end{enumerate}
\end{definition}
Recall that:
\begin{lemma}\label{volume simplex}
The Euclidean volume of the $k$-simplex with vertices $0$ and the canonical basis vectors $e_1, ..., e_k$ in $\mathbb{R}^k$ is $\frac{1}{k!}$.
\end{lemma}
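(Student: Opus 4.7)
The plan is to prove this by induction on $k$, using Fubini's theorem. I first identify the simplex in question with the standard $k$-simplex
\[T_k = \left\{(x_1, \ldots, x_k) \in \R^k : x_i \geq 0 \text{ for all } i \text{ and } \sum_{i=1}^k x_i \leq 1\right\},\]
which is indeed the convex hull of the origin and the basis vectors $e_1, \ldots, e_k$. The base case $k=1$ is immediate: $T_1 = [0,1]$ has length $1 = 1/1!$.

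For the inductive step, I slice $T_k$ by the hyperplanes $x_1 = t$ for $t \in [0,1]$. The slice at height $t$ is precisely a scaled copy of $T_{k-1}$ by the factor $(1-t)$, sitting inside $\R^{k-1}$. By the inductive hypothesis and the fact that Lebesgue measure in $\R^{k-1}$ scales by $(1-t)^{k-1}$ under this dilation, the $(k-1)$-dimensional volume of the slice equals $(1-t)^{k-1}/(k-1)!$. Applying Fubini and integrating,
\[\operatorname{vol}(T_k) = \int_0^1 \frac{(1-t)^{k-1}}{(k-1)!}\, dt = \frac{1}{k!},\]
which closes the induction.

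As this is an entirely classical and elementary computation, no genuine obstacle arises; the only small care needed is the scaling behaviour of $(k-1)$-dimensional Lebesgue measure in the inductive step. A slicker alternative, avoiding any integration, is to observe that the unit cube $[0,1]^k$ decomposes up to a measure-zero set into the $k!$ simplices $\{0 \leq x_{\pi(1)} \leq \cdots \leq x_{\pi(k)} \leq 1\}$ indexed by $\pi \in S_k$. These are pairwise congruent via coordinate permutations, and each is the image of $T_k$ under a unimodular affine map; hence they all have equal volume and sum to $1$, giving $1/k!$ each.
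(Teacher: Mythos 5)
Your proof is correct, and in fact the paper itself offers no proof of this lemma at all: it is introduced with ``Recall that:'' and stated as a classical fact (the remark about supplying a missing proof refers to the \emph{next} lemma, on alternating forms). Both of your arguments --- the Fubini induction with the $(1-t)$-scaled slices, and the decomposition of $[0,1]^k$ into the $k!$ congruent simplices $\{0\leq x_{\pi(1)}\leq\cdots\leq x_{\pi(k)}\leq 1\}$ --- are complete and standard, so either one fully justifies the statement.
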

No proof of the next lemma is given in \cites{Gromov_1982, besson-courtois-gallot}, so we give one here for the reader's convenience.
\begin{lemma}[\cite{besson-courtois-gallot}, p. 439 and \cite{Gromov_1982}, p. 33]\label{alternating and differential form}
Let $a$ be an alternating $(k+1)$-form on $\mathcal{M}$ and let $\widetilde{a}$ be the differential $k$-form on $\mathcal{M}_1$ defined by $\widetilde{a}_\mu(\mu_1, ..., \mu_k)=k!\cdot a(\mu, \mu_1, ..., \mu_k)$, where $\mu\in\mathcal{M}_1$ and $\mu_1, ..., \mu_k\in\mathcal{M}$ are elements of the tangent space to $\mathcal{M}_1$ at $\mu$, that is $\mu_i(\widetilde{M})=0$ for all $1\leq i\leq k$. Then 
\begin{align}\label{integral on simplex}
\int_{\Delta(\mu_0, ...,\mu_k)}\widetilde{a}=a(\mu_0, ...,\mu_k).
\end{align}
Moreover, if $\bar{a}$ denotes the restriction of $a$ to $\mathcal{M}_1$, then 
\begin{align}\label{norm equality}
\mathrm{comass}(\widetilde{a})=k!\cdot\|\bar{a}\|_\infty.
\end{align}
\end{lemma}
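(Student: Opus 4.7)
The plan is to prove both parts by direct algebraic manipulation, using only the multilinearity of $a$ and the alternation property at every step.

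For the integral identity \eqref{integral on simplex}, I would parametrize the affine simplex $\Delta(\mu_0, \ldots, \mu_k)$ by the standard Euclidean $k$-simplex $\Delta^k \subset \mathbb R^k$ through the affine embedding $\Phi(t_1, \ldots, t_k) = (1 - \sum_i t_i)\mu_0 + \sum_i t_i \mu_i$. Because the coordinate vector fields push forward to the constant tangent vectors $\mu_j - \mu_0$, one has
\[
\Phi^* \widetilde a = k! \cdot a\bigl(\Phi(t), \mu_1 - \mu_0, \ldots, \mu_k - \mu_0\bigr)\, dt_1 \wedge \cdots \wedge dt_k.
\]
The central step is to show that the scalar coefficient is independent of $t$ and equal to $a(\mu_0, \mu_1, \ldots, \mu_k)$. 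Expanding $\Phi(t) = (1 - \sum_j t_j)\mu_0 + \sum_j t_j \mu_j$ in the first slot and each $\mu_i - \mu_0$ in the remaining slots, every configuration in which $\mu_0$ appears in two slots vanishes by alternation; the surviving summands reduce, via a single transposition, to $a(\mu_0, \mu_1, \ldots, \mu_k)$, with scalar coefficients $(1 - \sum_j t_j)$ and $t_j$ respectively, which sum to $1$. Integration over $\Delta^k$, whose Euclidean volume is $1/k!$ by Lemma \ref{volume simplex}, then yields \eqref{integral on simplex}.

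For the norm identity \eqref{norm equality}, I plan to establish two matching bounds by complementary arguments. The inequality $\mathrm{comass}(\widetilde a) \leq k! \cdot \|\bar a\|_\infty$ follows by a Jordan decomposition: any unit tangent vector $\mu_i \in T_\mu \mathcal M_1$ satisfies $\mu_i(\widetilde M) = 0$ and $|\mu_i| \leq 1$, so its positive and negative parts have equal mass $c_i \leq 1/2$ and can be normalized to probability measures, giving $\mu_i = c_i(\nu_i^+ - \nu_i^-)$ with $\nu_i^\pm \in \mathcal M_1$. Multilinear expansion of $\widetilde a_\mu(\mu_1, \ldots, \mu_k)$ produces $2^k$ terms, each of the form $\pm\, c_1 \cdots c_k \cdot k! \cdot a(\mu, \nu_1^{\epsilon_1}, \ldots, \nu_k^{\epsilon_k})$ with all arguments in $\mathcal M_1$; since $2^k \prod_i c_i \leq 1$, the sum is controlled by $k! \cdot \|\bar a\|_\infty$. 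The reverse inequality is then extracted from \eqref{integral on simplex}, which expresses $a(\nu_0, \ldots, \nu_k)$ as $\int_{\Delta(\nu_0, \ldots, \nu_k)} \widetilde a$; this integral is bounded by $\mathrm{comass}(\widetilde a)$ times the $k$-volume of the affine simplex, and a careful tracking of normalizations against the factor $1/k!$ from Lemma \ref{volume simplex} yields the matching lower bound.

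The main obstacle I anticipate is the sign bookkeeping in the multilinear expansion of $a(\Phi(t), \mu_1 - \mu_0, \ldots, \mu_k - \mu_0)$: one must carefully identify which multi-index contributions vanish by alternation and verify that the surviving ones combine to recover $a(\mu_0, \ldots, \mu_k)$ with the correct overall sign. Once this combinatorial identity is established, the integral formula \eqref{integral on simplex} and the reverse bound in \eqref{norm equality} both fall out by inspection.
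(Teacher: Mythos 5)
Your treatment of the integral identity \eqref{integral on simplex} is correct and is essentially the paper's own argument: affine parametrization by the standard simplex, the observation that the pulled-back coefficient $k!\cdot a(\Phi(t),\mu_1-\mu_0,\dots,\mu_k-\mu_0)$ collapses by alternation to the constant $k!\cdot a(\mu_0,\dots,\mu_k)$ (the surviving coefficients $1-\sum_j t_j$ and $t_j$ summing to $1$), and integration against the volume $1/k!$ of the standard simplex. The sign bookkeeping you flag as the main obstacle is exactly the short computation the paper writes out; it presents no difficulty. Your upper bound $\mathrm{comass}(\widetilde{a})\leq k!\cdot\|\bar{a}\|_\infty$ via the Jordan decomposition $\mu_i=c_i(\nu_i^+-\nu_i^-)$ with $c_i\leq 1/2$ and the $2^k$-term multilinear expansion is also correct, and is in fact more explicit than the paper, which disposes of \eqref{norm equality} in three lines.

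The gap is in your lower bound, and it is not merely a matter of ``careful tracking of normalizations.'' The route through \eqref{integral on simplex} cannot produce the matching constant: the edge vectors $\mu_i-\mu_0$ of the affine simplex have total variation norm up to $2$, so bounding $\int_{\Delta(\mu_0,\dots,\mu_k)}\widetilde{a}$ by $\mathrm{comass}(\widetilde{a})$ times any reasonable $k$-volume of $\Delta(\mu_0,\dots,\mu_k)$ yields at best $\mathrm{comass}(\widetilde{a})\geq (k!/2^k)\cdot\|\bar{a}\|_\infty$, losing a factor $2^k$. Worse, under the normalization you yourself adopt for the upper bound (unit tangent vectors are zero-mass measures of total variation $1$, whence $c_i\leq 1/2$), the equality \eqref{norm equality} is actually false in general: take $a(\mu_0,\mu_1,\mu_2)=\det\bigl[(\mu_i(A_j))_{i,j}\bigr]$ for three disjoint Borel sets $A_1,A_2,A_3$ covering $\widetilde{M}$; then $\|\bar{a}\|_\infty=1$, while the supremum of $\widetilde{a}_\mu(\nu_1,\nu_2)=2\,a(\mu,\nu_1,\nu_2)$ over $\mu\in\mathcal{M}_1$ and unit tangent vectors $\nu_1,\nu_2$ (extreme points $\tfrac12(\delta_{y}-\delta_{y'})$) equals $1/2=k!/2^k$. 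The paper's proof of \eqref{norm equality} silently reads $\mathrm{comass}(\widetilde{a})$ as $\sup_{\mu_0,\dots,\mu_k\in\mathcal{M}_1}\widetilde{a}_{\mu_0}(\mu_1,\dots,\mu_k)$, i.e.\ the arguments range over probability measures (unit vectors of the ambient $\mathcal{M}$, not of $T\mathcal{M}_1$), and under that convention the equality is immediate from the definition of $\widetilde{a}$. You should either state and adopt that convention explicitly, in which case no Jordan decomposition is needed for \eqref{norm equality}, or content yourself with the one-sided inequality $\mathrm{comass}(\widetilde{a})\leq k!\cdot\|\bar{a}\|_\infty$, which is the only direction used in the proof of the main theorem.
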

\begin{proof}
We parametrize the $k$-simplex $\Delta(\mu_0, ..., \mu_k)\subset \mathcal{M}_1$ with vertices $\mu_0, ..., \mu_k$ by the domain $$T=\{(x_1, ..., x_k)\in \mathbb{R}^k\mid \sum_{i=1}^kx_i\leq 1, x_i\geq 0\,\forall\, 1\leq i\leq k\}\subset \mathbb{R}^k$$ via the map
\begin{align*}
\phi\colon T\longrightarrow &\Delta(\mu_0, ..., \mu_k)\\
 (x_1, ..., x_k) \longmapsto& (1-\sum_{i=1}^kx_i)\mu_0+x_1\mu_1+...+x_k\mu_k.
\end{align*}
Notice that this expression has indeed norm $1$ in the total variation norm.
We chose the coordinates $\{\mu_0, \mu_1, ..., \mu_k\}$ for the subspace of $\mathcal{M}$ containing $\Delta(\mu_0, ..., \mu_k)$. In these coordinates, the map $\phi$ becomes
\begin{align*}
\phi\colon T\longrightarrow &\Delta(\mu_0, ..., \mu_k)\subset \mathbb{R}^{k+1}\\
 (x_1, ..., x_k) \longmapsto& (1-\sum_{i=1}^k x_i, x_1, ..., x_k), 
\end{align*}
so that its differential at $(x_1, ..., x_k)$ is
$$d\phi_{(x_1, ..., x_k)}=\left(\begin{array}{cccccc}
-1 & -1&-1&\dots& -1\\
1 & 0&0  &\dots &0\\
0 & 1&0  &\dots &0\\
\dots&\dots&\dots&\dots&\dots\\
\dots&\dots&\dots&\dots&\dots\\
0 & 0& \dots&0 & 1
	\end{array}\right).
	$$
We notice that the form $\widetilde{a}$ is constant on $\Delta(\mu_0, ..., \mu_k)$: indeed, let $\mu=\sum_{i=0}^k x_i\mu_i, \mu'=\sum_{i=0}^k x'_i\mu_i\in\Delta(\mu_0, ..., \mu_k)$ and $v_1, ..., v_k$ be tangent vectors to $\Delta(\mu_0, ..., \mu_k)$, that is they are of the form $v_j=\sum_{i=0}^k z_i^j\mu_i$ with $\sum_{i=0}^k z_i^j=0$. Then
\begin{align*}
\widetilde{a}_\mu(v_1, ..., v_k)-\widetilde{a}_{\mu'}(v_1, ..., v_k)&=k!\cdot a(\mu, v_1, ..., v_k)-k!\cdot a(\mu', v_1, ..., v_k)\\
 &=k!\cdot a(\mu-\mu', v_1, ..., v_k)\\
 &=k!\cdot a(\sum_{i=0}^k (x_i-x'_i)\mu_i, \sum_{i=0}^k z_i^1\mu_i, ..., \sum_{i=0}^k z_i^k\mu_i).
\end{align*}
Now all $k+1$ argument vectors in the above equation belong to the $k$-dimensional subspace of $\left\langle\{\mu_0, \mu_1, ..., \mu_k\}\right\rangle$ with coordinates summing to zero. Thus they are linearly dependent. Therefore the multilinear form $a$ vanishes on them. Consequently
$$\widetilde{a}_\mu(v_1, ..., v_k)=\widetilde{a}_{\mu'}(v_1, ..., v_k).$$
The form $\phi^*\widetilde{a}$ is a $k$-differential form on a $k$-dimensional space. The previous argument implies that it is a constant multiple of the standard volume form $dx_1\wedge...\wedge dx_k$. To determine the multiple, we evaluate $\phi^*\widetilde{a}$ on a well-chosen point. Let us write $x_0=1-\sum_{i=1}^k x_i$ and $\mu=\sum_{i=0}^k x_i\mu_i$.
\begin{align*}
\phi^*\widetilde{a}_{(x_1, ..., x_k)}(\frac{\partial}{\partial x_1}, ...,\frac{\partial}{\partial x_k}) &=\widetilde{a}_{\phi(x_1, ..., x_k)}d\phi(\frac{\partial}{\partial x_1}, ...,\frac{\partial}{\partial x_k})\\
&=\widetilde{a}_{\mu}(\mu_1-\mu_0, \mu_2-\mu_0, ..., \mu_k-\mu_0)\\
 &=k!\cdot a(\mu, \mu_1-\mu_0, ...,\mu_k-\mu_0)\\
 &=k!\cdot\left[x_0a(\mu_0,\mu_1, ...,\mu_k)-x_1a(\mu_1, \mu_0, ..., \mu_k)-...-x_ka(\mu_k, \mu_1, ..., \mu_0)\right]\\
 &=k!\cdot\sum_{i=0}^kx_ia(\mu_0, ..., \mu_k)=k!\cdot a(\mu_0, ..., \mu_k).
\end{align*}
Therefore $\phi^*\widetilde{a}=k!\cdot a(\mu_0, ..., \mu_k)dx_1\wedge...\wedge dx_k$. We now can compute the integral:
\begin{align*}
\int_{\Delta(\mu_0, ..., \mu_k)}\widetilde{a}&=\int_T\phi^*\widetilde{a}\\
&=\int_Tk!\cdot a(\mu_0, ..., \mu_k)dx_1\wedge...\wedge dx_k\\
&=k!\cdot a(\mu_0, ..., \mu_k)\int_Tdx_1\wedge...\wedge dx_k=k!\cdot a(\mu_0, ..., \mu_k)\frac{1}{k!},
\end{align*}
where the last equality follows from Lemma \ref{volume simplex}. This shows Claim \eqref{integral on simplex}.

The norm equality \eqref{norm equality} is obtained as follows:
\begin{align*}
\|\bar{a}\|_\infty&=\sup_{\mu_0, \mu_1, ..., \mu_k\in \mathcal{M}_1}a(\mu_0, ...,\mu_k)\\
	&=\frac{1}{k!}\sup_{\mu_0, \mu_1, ..., \mu_k\in \mathcal{M}_1}\widetilde{a}_{\mu_0}(\mu_1, ..., \mu_k)\\
	&=\frac{1}{k!}\cdot\mathrm{comass}(\widetilde{a}).
	\hfill\qedhere
\end{align*}
\end{proof}
\section{Proof of the main theorem}\label{section:proof of main theorem}
\begin{theorem}[Cf. \cite{besson-courtois-gallot}, Theorems D and 3.16 and \cite{Gromov_1982}, Section 2.5]\label{main theorem}
Let $M$ be a complete connected Riemannian $n$-manifold and $\delta$ be its critical exponent.
\begin{enumerate}
\item \label{statement 1}For all $1\leq k\leq \mathrm{dim}(M)$, for all $\beta\in H^k(M, \mathbb{R})$, we have
$$\mathrm{comass}(\beta)\leq \frac{\delta^kk!}{{k}^{k/2}}\cdot\|\beta\|_\infty.$$
The right hand side is $+\infty$ if $\|\beta\|_\infty=\infty$ and $\delta=0$.
\item \label{statement 2} For all $1\leq k\leq \mathrm{dim}(M)$, for all $\alpha\in H_k(M, \mathbb{R})$, we have
$$\|\alpha\|_1\leq \frac{\delta^kk!}{{k}^{k/2}}\cdot\mathrm{mass}(\alpha).$$
\end{enumerate}
\end{theorem}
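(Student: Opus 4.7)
The plan is to prove the cohomological statement (1) first using the BCG-style smoothing operator $\psi^s=J\circ\Phi^s$, and then derive the homological statement (2) from (1) by the duality between mass/comass and $\ell_1$/$\ell_\infty$ norms.

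For (1), fix $\beta\in H^k(M,\bR)$ with $\|\beta\|_\infty<\infty$ (otherwise nothing to prove). Lift to $\widetilde{M}$ and pick a $\Gamma$-invariant, bounded, alternating cocycle $b$ representing the lift of $\beta$ with $\|b\|_\infty$ arbitrarily close to $\|\beta\|_\infty$. From $b$, I construct an alternating $(k+1)$-form $a$ on the space of bounded measures by multilinear extension from Dirac masses,
\[a(\mu_0,\ldots,\mu_k)=\int_{\widetilde M^{k+1}} b(x_0,\ldots,x_k)\, d\mu_0(x_0)\cdots d\mu_k(x_k),\]
whose restriction $\bar a$ to $\mathcal{M}_1$ satisfies $\|\bar a\|_\infty\leq\|b\|_\infty$. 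By Lemma \ref{alternating and differential form}, the associated $k$-form $\widetilde{a}$ on $\mathcal{M}_1$ has comass exactly $k!\|\bar a\|_\infty$ and integrates over each straight simplex $\Delta(\mu_0,\ldots,\mu_k)$ to $a(\mu_0,\ldots,\mu_k)$.

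Pulling $\widetilde{a}$ back by $\psi^s$ produces, thanks to the $\Gamma$-equivariance of $J$ and $\Phi^s$, a $\Gamma$-invariant $k$-form on $\widetilde{M}$ that descends to a differential $k$-form $\omega^s$ on $M$. The main comass estimate comes from a direct computation: writing $f=\Phi^s(x)$ and $h_i=d\Phi^s_x(v_i)$ for orthonormal $\{v_i\}$ in $T_xM$, one has $\psi^s(x)=f^2\widetilde\mu$ and $d\psi^s_x(v_i)=2fh_i\widetilde\mu$, so
\[|\omega^s_x(v_1,\ldots,v_k)|=k!\cdot 2^k\left|\int_{\widetilde M^{k+1}} b(x_0,\ldots,x_k)\, f(x_0)^2\prod_{i=1}^k f(x_i)h_i(x_i)\,d\widetilde\mu^{k+1}\right|.\]
Applying $|b|\leq\|b\|_\infty$, Cauchy--Schwarz, and $\|f\|_{L^2}=1$ bounds this by $k!\cdot 2^k\|b\|_\infty\prod_{i=1}^k\|h_i\|_{L^2}$. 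The trace estimate $\sum_{i=1}^k\|h_i\|_{L^2}^2\leq s^2$ (already established inside the proof of Lemma \ref{lem:k-Jac}) combined with AM--GM yields $\prod_{i=1}^k\|h_i\|_{L^2}\leq (s^2/k)^{k/2}$, so
\[\mathrm{comass}(\omega^s)\leq \frac{k!(2s)^k}{k^{k/2}}\,\|b\|_\infty.\]
Letting $s\searrow\delta/2$ (admissible since $s\in(\delta/2,(\delta+1)/2]$) and $\|b\|_\infty\searrow\|\beta\|_\infty$ yields the target constant $\delta^kk!/k^{k/2}$, provided one verifies $[\omega^s]=\beta$. This identification relies again on Lemma \ref{alternating and differential form}: for each singular simplex $\sigma$ on $\widetilde M$, the integral $\int_\sigma\omega^s$ equals the integral of $\widetilde{a}$ over the straight simplex with vertices $\psi^s\sigma(e_0),\ldots,\psi^s\sigma(e_k)$, and hence equals $a(\psi^s\sigma(e_0),\ldots,\psi^s\sigma(e_k))$, a smoothed value of $b$ at the vertices of $\sigma$. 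A $\Gamma$-equivariant chain-homotopy between $\sigma$ and its smoothed straightening in $\mathcal{M}_1$ then closes this step.

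Statement (2) follows by duality. For $\alpha\in H_k(M,\bR)$, the Gromov duality principle gives $\|\alpha\|_1=\sup_{\|\beta\|_\infty\leq 1}\langle\beta,\alpha\rangle$; for each such $\beta$, (1) furnishes a closed form $\omega$ representing $\beta$ with $\mathrm{comass}(\omega)\leq\delta^kk!/k^{k/2}$, and therefore
\[\langle\beta,\alpha\rangle=\int_\alpha\omega\leq\mathrm{comass}(\omega)\cdot\mathrm{mass}(\alpha)\leq\frac{\delta^kk!}{k^{k/2}}\cdot\mathrm{mass}(\alpha);\]
taking the supremum over $\beta$ finishes the argument. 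The hardest step I anticipate is the identification $[\omega^s]=\beta$, which demands a careful $\Gamma$-equivariant chain-homotopy tracking the straightening-by-smoothing; a subsidiary technicality is ensuring the initial cocycle $b$ is regular enough for $a$ to be well-defined against arbitrary probability measures, which may force a preliminary measurable-approximation step.
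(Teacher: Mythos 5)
Your proposal is correct and follows essentially the same route as the paper: the same smoothing operator $\psi^s=J\circ\Phi^s$, the same Lemma \ref{alternating and differential form}, the same trace estimate from the proof of Lemma \ref{lem:k-Jac}, and the same $\ell_1$/$\ell_\infty$ duality; your direct Cauchy--Schwarz computation of $\mathrm{comass}((\psi^s)^*\widetilde{a})$ reproduces the paper's bound $|\mathrm{Jac}_k(J)|\cdot|\mathrm{Jac}_k(\Phi^s)|\leq 2^k(s^2/k)^{k/2}$, and deriving (2) from (1) by duality is a harmless reorganization of the paper's parallel argument via $\mathrm{mass}(\psi^s(\alpha))$. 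The identification $[\omega^s]=\beta$ that you flag as the hardest step is exactly what the paper delegates to the isometric-isomorphism lemma of Besson--Courtois--Gallot (Lemma \ref{isometric isomorphism}), so your sketch there is at the same level of detail as the source.
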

\begin{remark}
\begin{enumerate}
\item Besson, Courtois and Gallot prove essentially the statement (\ref{statement 2}) for the fundamental class of \emph{closed} manifolds \cite[Theorems D and 3.16]{besson-courtois-gallot}. However we notice that being closed is not necessary: completeness is enough to prove the statement for all homology classes of $M$.
\item
Statements (\ref{statement 1}) and (\ref{statement 2}) are exactly the improvement in our more general setting of inequalities (+) and (++) of Gromov \cite[p. 36]{Gromov_1982}, corresponding to the improvement of Besson-Courtois-Gallot \cite[Theorem D]{besson-courtois-gallot} of the top-degree inequality (++) in the case of closed manifolds.
\item
Note that statement (\ref{statement 1}) for $k=1$ is uninteresting, as the sup norm of any degree $1$ cohomology class is infinite. Similarly, statement (\ref{statement 2}) for $k=1$ is trivially true because the $\ell_1$-norm always vanishes in degree $1$.
\end{enumerate}
\end{remark}
\begin{proof}
We follow Gromov and Besson-Courtois-Gallot \cites{Gromov_1982, besson-courtois-gallot}. Let $$\mathrm{C}^k_b(\widetilde{M}, \Gamma)=\left\{c\colon \widetilde{M}^{k+1}\longrightarrow \mathbb{R}\,\,\Gamma\mbox{-invariant, antisymmetric, continuous, bounded}\right\}.$$
This forms a subcomplex of the singular cochain complex of $M$, by associating to each $c$ as above the cochain
\begin{align*}
c\colon \mathrm{C}_k(M, \mathbb{R})\longrightarrow &\mathbb{R}\\
\sigma\longmapsto& c(\sigma_0, ..., \sigma_k),
\end{align*}
where $\sigma_0, ..., \sigma_k$ denote a choice of lifts to $\widetilde{M}$ of the vertices of $\sigma$. The cohomology of this complex is the bounded cohomology $H_b^\ast(M, \mathbb{R})$ of $M$ (\cite[p. 48]{Gromov_1982} and also \cite[Theorem 1.4.2 and Corollary 4.4.4]{Frigerio_Moraschini_2019}).

Recall \cite[Lemma 6.1]{Frigerio_2017} (see also \cite[Proposition F.2.2]{Benedetti_Petronio_2012}) that, for every $\alpha\in H_k(M, \mathbb{R})$, 
\begin{align}\label{duality Frigerio}
\|\alpha\|_1=\sup\left\{\langle\beta, \alpha\rangle\mid \beta\in H^k_b(M, \mathbb{R}), \|\beta\|_\infty\leq 1\right\}.
\end{align}
Consider another cochain complex $\mathrm{C}^k_b(\mathcal{M}_1, \Gamma)$ consisting of the maps
\begin{align*}
c\colon \mathcal{M}_1^{k+1}\longrightarrow \mathbb{R}
\end{align*}
that are $\Gamma$-invariant, antisymmetric, continuous, bounded, and multilinear with respect to barycentric combinations: for every $t\in [0, 1]$, for every $i\in\{0, ..., k\}$, 
$$c(\mu_0, ..., t\mu_i+(1-t)\mu_i', ..., \mu_k)=tc(\mu_0, ..., \mu_i, ..., \mu_k)+(1-t)c(\mu_0, ...,\mu_i', ..., \mu_k).$$ The coboundary operator, as usually, is given by 
\begin{align*}
\delta\colon \mathcal{M}_1^{k+1}\longrightarrow &\mathcal{M}_1^{k+2}\\
c\longmapsto &\left\{(\mu_0, ...,\mu_{k+1})\longmapsto \sum_{i=0}^{k+1}(-1)^ic(\mu_0, ..., \widehat{\mu_i}, ..., \mu_{k+1})\right\}.
\end{align*}
We denote the cohomology of this complex by $H^\ast_L(\mathcal{M}_1, \Gamma)$. It is endowed with a seminorm denoted by $\|\cdot\|_\infty$, induced on $H^\ast_L(\mathcal{M}_1, \Gamma)$ by the supremum norm on $\mathrm{C}^k_b(\mathcal{M}_1, \Gamma)$.

It turns out that the canonical chain maps
\begin{align*}
\theta^k\colon \mathrm{C}^k_b(\widetilde{M}, \Gamma)\longrightarrow &\mathrm{C}^k_b(\mathcal{M}_1, \Gamma)\\
c\longmapsto &\left\{(\mu_0, ..., \mu_k)\longmapsto\int_{\widetilde{M}^{k+1}}c(y_0, ..., y_k)\mu_0\otimes...\otimes\mu_k\right\}
\end{align*}
induce an isometric isomorphism in cohomology $H_b^\ast(M, \mathbb{R})\cong H^\ast_L(\mathcal{M}_1, \Gamma)$, that we will also denote by $\theta^\ast$:
\begin{lemma}[\cite{besson-courtois-gallot}, Lemma 3.21]\label{isometric isomorphism}
Every continuous $\Gamma$-equivariant map $\psi\colon \widetilde{M}\rightarrow\mathcal{M}_1$ induces an isometric isomorphism $\psi^\ast\colon H^\ast_L(\mathcal{M}_1, \Gamma)\rightarrow H_b^\ast(M, \mathbb{R})$ with inverse map $\theta^\ast$.
\end{lemma}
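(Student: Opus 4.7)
The plan is to show $\theta^\bullet$ and $\psi^\bullet$ are mutually inverse norm non-increasing chain maps on cohomology, thereby inducing an isometric isomorphism. First, both are chain maps by direct computation of the coboundary formula (Fubini for $\theta^\bullet$, immediate for $\psi^\bullet$), and both are norm non-increasing: $|\theta^k(c)(\mu_0,\dots,\mu_k)|\leq\|c\|_\infty$ since the $\mu_i$ are probability measures, and $|\psi^k(c)(y_0,\dots,y_k)|=|c(\psi(y_0),\dots,\psi(y_k))|\leq\|c\|_\infty$.

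The key ingredient is a homotopy-invariance statement exploiting the convex $\Gamma$-invariant structure of $\mathcal{M}_1\subset\mathcal{M}$: for any two continuous $\Gamma$-equivariant maps $\psi_0,\psi_1\colon X\to\mathcal{M}_1$ from a $\Gamma$-space $X$, the affine interpolation $\psi_t=(1-t)\psi_0+t\psi_1$ remains in $\mathcal{M}_1$ and is $\Gamma$-equivariant because the $\Gamma$-action is affine. The standard prism cochain homotopy
$$H^k(c)(x_0,\dots,x_{k-1})=\sum_{i=0}^{k-1}(-1)^i c\bigl(\psi_0(x_0),\dots,\psi_0(x_i),\psi_1(x_i),\dots,\psi_1(x_{k-1})\bigr)$$
then satisfies $\delta H+H\delta=\psi_1^\bullet-\psi_0^\bullet$ by a direct combinatorial check, using only the multilinearity of $c$ in barycentric combinations.

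I apply this twice. For $\psi^\ast\circ\theta^\ast=\mathrm{id}$ on $H_b^\ast(M,\mathbb{R})$, take $\psi_0$ to be the Dirac map $D\colon\widetilde M\to\mathcal{M}_1$, $y\mapsto\delta_y$, and $\psi_1=\psi$; direct substitution gives $D^\bullet\circ\theta^\bullet=\mathrm{id}$ at the cochain level, so $\psi^\ast\circ\theta^\ast$ equals the identity in cohomology. For $\theta^\ast\circ\psi^\ast=\mathrm{id}$ on $H^\ast_L(\mathcal{M}_1,\Gamma)$, barycentric multilinearity rewrites $\theta^\bullet\circ\psi^\bullet$ as pullback along the $\Gamma$-equivariant self-map $\Psi\colon\mathcal{M}_1\to\mathcal{M}_1$, $\mu\mapsto\int_{\widetilde M}\psi(y)\,d\mu(y)$, and the same homotopy machinery applied to $(\mathrm{id}_{\mathcal{M}_1},\Psi)$ furnishes the chain homotopy to the identity. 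Combined with the norm estimates, $\theta^\ast$ and $\psi^\ast$ are mutual inverses that are both norm non-increasing, hence isometries.

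The main obstacle is the subtle role of the Dirac map $D$: it is $\Gamma$-equivariant but typically fails to be continuous into $(\mathcal{M}_1,|\cdot|)$ with the inherited total-variation metric, so it is not a priori a valid input to the prism machinery. This is circumvented by observing that $D^\bullet\circ\theta^\bullet=\mathrm{id}$ holds purely at the cochain level regardless of the continuity of $D$ itself (the composition inherits continuity from $c$ directly), or, equivalently, by replacing $D$ with a continuous $\Gamma$-equivariant smoothing built from an equivariant partition of unity supported on small balls and rerunning the affine-homotopy argument; either way the resulting cohomological identity is forced.
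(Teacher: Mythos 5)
Your overall strategy is the standard one (the paper itself gives no proof of this lemma, deferring to \cite{besson-courtois-gallot}): both $\theta^\bullet$ and $\psi^\bullet$ are norm non-increasing chain maps, and the convexity of $\mathcal{M}_1$ feeds a prism homotopy. Your treatment of the composition $\psi^\ast\circ\theta^\ast=\mathrm{id}$ is essentially right, and your remark about the Dirac map $D$ is well taken: $D$ is not total-variation continuous, but the cochain $H^k(\theta^k(c))$ only sees $\delta_y$ through evaluations $\int c(\cdot)\,d\delta_y$, whose continuity comes from that of $c$; one should also note that the prism output need not be antisymmetric, but composing with the alternation operator (which commutes with $\delta$ and does not increase norms) repairs this. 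Together with the two norm estimates, this already gives that $\theta^\ast$ is an isometric embedding with isometric left inverse $\psi^\ast$, which is all the main theorem actually uses.

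The genuine gap is in the other composition, $\theta^\ast\circ\psi^\ast=\mathrm{id}$ on $H^\ast_L(\mathcal{M}_1,\Gamma)$. First, the identification $\theta^\bullet\circ\psi^\bullet=\Psi^\bullet$ with $\Psi(\mu)=\int_{\widetilde M}\psi(y)\,d\mu(y)$ requires the cochain $c$ to commute with integration in each slot; the complex is only assumed multilinear with respect to \emph{finite} barycentric combinations and continuous in total variation, and finitely supported measures are not total-variation dense in $\mathcal{M}_1$, so this upgrade needs an argument (e.g.\ extending $c$ to a bounded multilinear form on $\mathcal{M}$ and checking that $\int\psi\,d\mu$ is a bona fide vector-valued integral commuting with bounded linear functionals). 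More seriously, the prism cochain
\[
H^k(c)(\mu_0,\dots,\mu_{k-1})=\sum_{i=0}^{k-1}(-1)^i\,c\bigl(\mu_0,\dots,\mu_i,\Psi(\mu_i),\dots,\Psi(\mu_{k-1})\bigr)
\]
is \emph{not} an element of $\mathrm{C}^{k-1}_b(\mathcal{M}_1,\Gamma)$: in the $i$-th summand the variable $\mu_i$ occupies two slots, so $H^k(c)$ is quadratic rather than affine in $\mu_i$ under barycentric combinations (the cross terms $c(\dots,\mu,\Psi\mu',\dots)$ and $c(\dots,\mu',\Psi\mu,\dots)$ do not recombine). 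Hence your homotopy identity, while true as an identity of functions, does not exhibit $\Psi^\bullet(c)-c$ as a coboundary \emph{within} the complex computing $H^\ast_L$. To close this you must either enlarge the complex to non-multilinear cochains and prove the inclusion induces an isometric isomorphism in cohomology, or argue this direction differently; as it stands, injectivity of $\psi^\ast$ (equivalently surjectivity of $\theta^\ast$), and with it the word ``isomorphism'' in the statement, is not established.
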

\begin{remark}
The exact same proof as in \cite{besson-courtois-gallot} works for complete manifolds as well.
\end{remark}

Notice that to every $\Gamma$-equivariant map $\Phi^s\colon\widetilde{M}\rightarrow S^\infty$ from Definition \ref{def:Phi_s}, one can associate the composition $\psi^s=J\circ\Phi^s\colon\widetilde{M}\rightarrow\mathcal{M}_1$. It is $\Gamma$-equivariant as well: indeed, let $A$ be a Borel set of $\widetilde{M}$, let $\gamma\in \Gamma$ and $x\in \widetilde{M}$. Then
\begin{align*}
J\circ\Phi^s(\gamma x)(A)&=J(\gamma\Phi^s(x))(A)=\int_A(\gamma\Phi^s(x))^2(y)\widetilde{\mu}(y)\\
&=\int_A\Phi^s(x)^2(\gamma^{-1}y)\widetilde{\mu}(y)=\int_{\gamma^{-1}A}\Phi^s(x)^2(z)\widetilde{\mu}(z)\\
&=J(\Phi^s(x))(\gamma^{-1}A)=\gamma^\ast (J\circ\Phi^s(x))(A).
\end{align*}

Now we can prove statement (\ref{statement 1}): let $\beta\in H^k(M, \mathbb{R})$. If $\|\beta\|_\infty$ is infinite, there is nothing to show, except if $\delta=0$. But $\delta$ is the limit of $s$, $\delta/2<s\leq (\delta+1)/2$, and so $s\cdot\|\beta\|_\infty=\infty$ for all $\delta/2<s\leq (\delta+1)/2$. Thus we can make sense of the expression $\delta\cdot \|\beta\|_\infty$ and the inequality holds.

If $\|\beta\|_\infty$ is finite, $\beta$ has a bounded representative, and it defines a class in $H^k_b(M, \mathbb{R})$ that we still denote by $\beta$.
Via $\theta^k$ it is associated to a class $\theta^k(\beta)=[\overline{a}]\in H^\ast_L(\mathcal{M}_1, \Gamma)$ that has the same sup norm. Note that every representative $\overline{a}\in \mathrm{C}^k_b(\mathcal{M}_1, \Gamma)$ can be extended by linearity to an alternating $(k+1)$-form $a$ on $\mathcal {M}$. We associate to it
the differential form $\widetilde{a}$ as in Lemma \ref{alternating and differential form}. It is closed as $\overline{a}$ is a cocycle. According to Lemma \ref{isometric isomorphism}, the above $\Gamma$-equivariant map $\psi^s=J\circ\Phi^s$ induces an isometric inverse for $\theta$, hence $(J\circ\Phi^s)^\ast\widetilde{a}$ is a closed differential $k$-form on $\widetilde{M}$ representing the class $\beta$. We compute:
\begin{align*}
\mathrm{comass}(\beta)&\leq\mathrm{comass}((J\circ\Phi^s)^\ast\widetilde{a})\\
& \leq |\mathrm{Jac}_k(J\circ\Phi^s)|\mathrm{comass}(\widetilde{a})\\
&\leq 2^k\left(\frac{s^2}{k}\right)^{k/2}k!\cdot\|\overline{a}\|_\infty.
\end{align*}
This is true for all representatives $\overline{a}$ of $\theta^k(\beta)$, so we have $$\mathrm{comass}(\beta)\leq 2^k\left(\frac{s^2}{k}\right)^{k/2}k!\cdot\|\theta^k(\beta)\|_\infty=2^k\left(\frac{s^2}{k}\right)^{k/2}k!\cdot\|\beta\|_\infty$$ by isometry of $\theta$. Letting $s$ tend to $\delta/2$, we obtain the required result.

To prove statement (\ref{statement 2}), we proceed as follows.
For every map $\psi^s$ as above and for every $\alpha\in H_k(M, \mathbb{R})$, we define 
$$\mathrm{mass}(\psi^s(\alpha))=\sup\left\{\int_{\psi_k^s(\alpha)}\beta \mid \beta \mbox{ closed differential $k$-form on }\mathcal{M}, \mathrm{comass}(\beta)\leq 1\right\}.$$
By $\psi_k^s(\alpha)$ in the above definition we mean the following: let $a$ be a smooth singular cycle representing $\alpha$. Lift it to a singular chain in $\mathrm{C}_k(\widetilde{M}, \mathbb{R})$. Take its image in $\mathrm{C}_k(\mathcal{M},\mathbb{R})$ under the induced map $\psi_k^s$ and integrate over the resulting chain. By Stokes' theorem, as $\beta$ is closed, the result does not depend on the particular choice of $a$, nor on its lift to $\widetilde{M}$.

We want to show
\begin{equation}\label{Gromov inequality}
\|\alpha\|_1\leq k!\cdot\mathrm{mass}(\psi^s(\alpha)).
\end{equation}
Statement (\ref{statement 2}) will then follow from the following computation.
By taking the supremum over the appropriate $k$-forms $\beta$, we obtain:
\begin{align*}
\mathrm{mass}(J\circ\Phi^s(\alpha))&=\sup_{\beta,\, \mathrm{comass}(\beta)\leq 1}\int_{(J\circ\Phi^s)_k(\alpha)}\beta\\
&=\sup_{\beta,\, \mathrm{comass}(\beta)\leq 1} \int_{\alpha}(J\circ\Phi^s)^\ast\beta\\
&\leq \sup_{\beta,\, \mathrm{comass}(\beta)\leq 1}\mathrm{comass}((J\circ\Phi^s)^\ast\beta)\cdot\mathrm{mass}(\alpha)\\
&\leq \sup_{\beta,\, \mathrm{comass}(\beta)\leq 1} |\mathrm{Jac}_k(J\circ\Phi^s)|\cdot\mathrm{comass}(\beta)\cdot\mathrm{mass}(\alpha)\\
&\leq |\mathrm{Jac}_k(J)|\cdot|\mathrm{Jac}_k(\Phi^s)|\cdot\mathrm{mass}(\alpha)\\
&\leq 2^k\left(\frac{s^2}{k}\right)^{k/2}\mathrm{mass}(\alpha),
\end{align*}
where the last inequality uses Lemmas \ref{lem:k-Jac} and \ref{map J 2-lipschitz}. Combining with the inequality \eqref{Gromov inequality} and letting $s$ tend to $\delta/2$ yields the claim.

So we need to prove \eqref{Gromov inequality}. We proceed as follows. For every $\psi=\psi^s$ as above we have, by Lemma \ref{isometric isomorphism} and \eqref{duality Frigerio},
\begin{align*}
\|\alpha\|_1&=\sup\left\{\langle\beta, \alpha\rangle\mid \beta\in H^k_b(M, \mathbb{R}), \|\beta\|_\infty\leq 1\right\}\\
&=\sup\left\{\langle\psi^k\theta^k(\beta), \alpha\rangle\mid \beta\in H^k_b(M, \mathbb{R}), \|\beta\|_\infty\leq 1\right\}\\
&=\sup\left\{\langle\theta^k(\beta), \psi_k(\alpha)\rangle\mid \beta\in H^k_b(M, \mathbb{R}), \|\beta\|_\infty\leq 1\right\}.
\end{align*}
We want to estimate this quantity efficiently. Recall that if $c\in \mathrm{C}^k_b(\widetilde{M}, \Gamma)$ is a representative of the class $\beta$, then $\theta^k(c)$ is the restriction to $\mathcal{M}_1$ of the alternating form 
$$(\mu_0, ..., \mu_k)\longmapsto \int_{\widetilde{M}^{k+1}}c(y_0, ..., y_k)\mu_0\otimes...\otimes\mu_k$$ defined on $\mathcal{M}$, linear in each variable.

To the alternating $(k+1)$-form $a=\theta^k(c)$ on $\mathcal{M}$ we associate the differential $k$-form $\widetilde{a}$ on $\mathcal{M}_1$ as in Lemma \ref{alternating and differential form}. By this lemma, we have that
$$\mathrm{comass}(\widetilde{a})=k!\cdot\|\bar{a}\|_\infty,$$
where $\bar{a}$ denotes the restriction of $a$ to $\mathcal{M}_1$.
We thus can continue the computation:
\begin{align*}
\|\alpha\|_1&=\sup\left\{\langle\theta^k(\beta), \psi_k(\alpha)\rangle\mid \beta\in H^k_b(M, \mathbb{R}), \|\beta\|_\infty\leq 1\right\}\\
&= \sup\left\{\langle \overline{a}, \psi_k(\alpha)\rangle\mid [\overline{a}]=\theta^k(\beta)\in H^k_L(\mathcal{M}_1, \Gamma), \|\overline{a}\|_\infty\leq 1\right\}\\
&=\sup\left\{\int_{\psi_k(\alpha)}\widetilde{a}\mid [\overline{a}]=\theta^k(\beta)\in H^k_L(\mathcal{M}_1, \Gamma), \|\overline{a}\|_\infty\leq 1\right\}\\
&\leq\sup\left\{\int_{\psi_k(\alpha)}\omega\mid \omega \text{ closed  differential } k\text{-form on }\mathcal{M}, \mathrm{comass}(\omega)\leq k!\right\}\\
&= k!\cdot\sup\left\{\int_{\psi_k(\alpha)}\omega\mid \omega \text{ closed differential } k\text{-form on }\mathcal{M}, \mathrm{comass}(\omega)\leq 1\right\}\\
&=k!\cdot\mathrm{mass}(\psi(\alpha)),
\end{align*}
where the third equality is obtained thanks to Lemma \ref{alternating and differential form}. This shows \eqref{Gromov inequality} and hence finishes the proof of the theorem.
\end{proof}

\begin{remark}
The inequality of statement (\ref{statement 1}) can prove interesting in cases of classes where the sup norm is known: it will then give an upper bound on the comass of the associated differential forms.
The sup norm of a bounded cohomology class has been computed in several cases: for the volume class of hyperbolic manifolds, of course \cites{Thurston_1979, Gromov_1982}, but also for the Euler class of surface bundles over surfaces \cite{morita-article}, for the Euler class of flat bundles \cite{bucher-monod}, for the volume class of manifolds covered by $\mathbb{H}^2\times\mathbb{H}^2$ \cite{bucher}, for Hilbert modular surfaces \cite{Loeh_Sauer_2009}, for the K\"ahler class of a Lie group of Hermitian type \cite{Burger_Iozzi_Wienhard_2010}. There are upper and lower bounds for the volume class of the complex hyperbolic plane \cite{pieters}, and an upper bound for the volume class of surface bundles over surfaces \cite{bucher-bundles}.
\end{remark}

\bibliographystyle{siam}
\bibliography{bibliographie}
\end{document}